\RequirePackage{silence}
\documentclass[a4paper,11pt]{amsart}
\usepackage[hmarginratio={1:1},vmarginratio={1:1},lmargin=60.0pt,tmargin=60.0pt]{geometry}

\allowdisplaybreaks

\usepackage[numbers]{natbib}
\usepackage[utf8]{inputenc}

\usepackage{latexsym,exscale,mathtools,textcomp}
\usepackage{amssymb,amsmath,amsthm,amsfonts,mathrsfs,bbm,enumitem,stmaryrd}
\usepackage[table]{xcolor}
\usepackage{graphicx}
\usepackage{booktabs}
\usepackage{xparse}

\usepackage{tikz}
\usetikzlibrary{arrows.meta,positioning}

\setlist[enumerate]{itemsep=0.15cm,label=\emph{\upshape(\alph*)}}
\setlist[enumerate,2]{itemsep=0.15cm,label=\emph{\upshape(\roman*)}}

\definecolor{mygray}{gray}{0.6}
\definecolor{mygraydark}{gray}{0.4}
\definecolor{mygraylight}{gray}{0.85}
\definecolor{spinach}{RGB}{46,139,87}
\definecolor{tomato}{RGB}{255,99,71}
\definecolor{orchid}{RGB}{143,40,194}
\definecolor{neon}{RGB}{77,77,255}
\definecolor{pumpkin}{RGB}{224,180,80}
\definecolor{citron}{RGB}{190,180,90}

\definecolor{lava}{RGB}{207,16,32}
\definecolor{cream}{RGB}{255,253,208}
\definecolor{verdigris}{RGB}{67,179,174}
\definecolor{Black}{RGB}{0,0,0}
\definecolor{mydarkblue}{RGB}{10,10,170}
\definecolor{darkspinach}{RGB}{20,70,20}
\definecolor{darktomato}{RGB}{155,40,30}
\definecolor{darkorchid}{RGB}{50,10,100}
\definecolor{darklava}{RGB}{150,8,16}

\DeclarePairedDelimiterX{\set}[1]{\{}{\}}{\setargs{#1}}
\NewDocumentCommand{\setargs}{>{\SplitArgument{1}{|}}m}{\setargsaux#1}
\NewDocumentCommand{\setargsaux}{mm}
{\IfNoValueTF{#2}{#1} {#1\,\delimsize|\,\mathopen{}#2}}

\newcommand{\C}{\mathbb{C}}

\newcommand{\Q}{\mathbb{Q}}
\newcommand{\Z}{\mathbb{Z}}
\newcommand{\F}{\mathbb{F}}

\newcommand{\Sn}{\mathfrak{S}}

\newcommand{\Parity}{\operatorname{Parity}}
\newcommand{\rank}{\operatorname{rank}}

\newcommand{\Inv}{\operatorname{Inv}}

\usepackage{aliascnt,etoolbox}
\def\NewTheorem#1{%
\newaliascnt{#1}{equation}%
\newtheorem{#1}[#1]{#1}%
\aliascntresetthe{#1}%
\expandafter\def\csname #1autorefname\endcsname{#1}%
}
\def\equationautorefname~#1\null{(#1)\null}

\numberwithin{equation}{subsection}

\NewTheorem{Proposition}
\NewTheorem{Theorem}
\NewTheorem{Corollary}
\AtEndEnvironment{Corollary}{\null\hfill$\square$}%
\NewTheorem{Lemma}
\theoremstyle{definition}
\NewTheorem{Definition}
\AtEndEnvironment{Definition}{\null\hfill$\Diamond$}%
\NewTheorem{Notation}
\AtEndEnvironment{Notation}{\null\hfill$\Diamond$}%
\NewTheorem{Example}
\AtEndEnvironment{Example}{\null\hfill$\Diamond$}%
\theoremstyle{remark}
\NewTheorem{Remark}
\AtEndEnvironment{Remark}{\null\hfill$\Diamond$}%

\usepackage[hypertexnames=false]{hyperref}
\usepackage{bookmark}
\hypersetup{
pdftoolbar=true,
pdfmenubar=true,
pdffitwindow=false,
pdfstartview={FitH},
pdftitle={Torsion proliferation},
pdfauthor={Joseph Baine and Daniel Tubbenhauer},
pdfsubject={},
pdfcreator={Joseph Baine and Daniel Tubbenhauer},
pdfproducer={Joseph Baine and Daniel Tubbenhauer},
pdfkeywords={p-canonical basis, p-Kazhdan--Lusztig basis, Kazhdan--Lusztig polynomials, Schubert varieties, interval pattern embeddings},
pdfnewwindow=true,
colorlinks=true,
linkcolor=mydarkblue,
citecolor=teal,
filecolor=magenta,
urlcolor=orchid,
linkbordercolor=lava,
citebordercolor=teal,
urlbordercolor=orchid,
linktocpage=true
}

\def\makeautorefname#1#2{\csdef{#1autorefname}{#2}}

\makeautorefname{section}{Section}%
\makeautorefname{subsection}{Section}%
\makeautorefname{subsubsection}{Section}%

\begin{document}
\title[Torsion proliferation]{Torsion proliferation}
\author[J. Baine and D. Tubbenhauer]{Joseph Baine and Daniel Tubbenhauer}

\address{J.B.: Max-Planck-Institut f\"{u}r Mathematik, Vivatsgasse 7, 53111 Bonn, Germany.}
\email{baine@mpim-bonn.mpg.de}

\address{D.T.: The University of Sydney, School of Mathematics and Statistics F07, Office Carslaw 827, NSW 2006, Australia, \href{http://www.dtubbenhauer.com}{www.dtubbenhauer.com}, \href{https://orcid.org/0000-0001-7265-5047}{ORCID 0000-0001-7265-5047}}
\email{daniel.tubbenhauer@sydney.edu.au}

\begin{abstract}
	This paper considers the generic behaviour of sheaves supported on Schubert varieties in finite flag varieties. 
	Our main results are that, for any fixed prime $p$, as the rank of the associated algebraic group grows, almost every intersection cohomology sheaf admits $p$-torsion in its stalks or costalks, and almost every indecomposable parity sheaf is not perverse. 
	For certain families of partial flag varieties we also determine explicit asymptotics for this growth. 
\end{abstract}

\subjclass[2020]{Primary: 14M15, 20C08; Secondary: 05A05, 60C05.}
\keywords{p-Kazhdan--Lusztig basis, p-Kazhdan--Lusztig polynomials, Schubert varieties, interval pattern embeddings.}

\addtocontents{toc}{\protect\setcounter{tocdepth}{1}}

\maketitle

%\tableofcontents

\section{Introduction}\label{S:Statement}

	Fundamental questions in modular representation theory, like determining the characters of simple modules for reductive algebraic groups (in positive characteristic) and finite groups of Lie type (in defining characteristic), are equivalent to computing the stalks of intersection cohomology sheaves on flag varieties. 
	When these stalks are devoid of $p$-torsion, their graded ranks are governed by the combinatorics of Kazhdan--Lusztig polynomials. 
	However, when $p$-torsion is present, their graded ranks are governed by the considerably more subtle $p$-Kazhdan--Lusztig polynomials. 
	\\
	\par  
	Historically, it was understood that $p$-torsion existed in the stalks or costalks (henceforth we just write `(co)stalks') of intersection cohomology sheaves on certain small-rank flag varieties. 
	Thus, it was natural to ask two questions: 
	(1) Which intersection cohomology sheaves admit $p$-torsion in their (co)stalks; and 
	(2) For which primes $p$ does $p$-torsion exist?  
	The latter question appeared in the guise of Lusztig's conjecture and James' conjecture. 
	Williamson's torsion explosion famously showed that the primes $p$ for which $p$-torsion exists grow at least exponentially in the rank of the group; see \cite{Williamson-explosion}. 
	However, there are few intersection cohomology sheaves for which we can explicitly say $p$-torsion exists in their (co)stalks, and their generic behaviour is far from understood. 
	\\
	\par 
	The results of this paper concern the first question.
	Notation will be fixed in \autoref{S:Background}. 
	In the meantime, we write $W_n$ for the Weyl group of a rank $n$ classical algebraic group $G$ with Borel subgroup $B$, and  $\mathcal{IC}_x^{\Z}$ for the integral intersection cohomology sheaf supported on the Schubert variety $\overline{B x B / B}$ in the flag variety $G/B$. 
	The corresponding Kazhdan--Lusztig basis and $p$-Kazhdan--Lusztig basis elements are denoted $b_x$ and ${}^p b_x$ respectively. 
	The first main result of this paper is:  
	
	\begin{Theorem}\label{T:Main}
		Fix a prime $p$ and a tower $\{ G_n\}$ of classical algebraic groups with Weyl groups $\{W_n\}$. Then 
		\begin{align*}
			\lim_{n \rightarrow \infty}
			\frac{|\{ x \in W_n  ~|~ p\text{-torsion exists in the (co)stalks of } \mathcal{IC}_x^{\Z} \}|}{|W_n|}
			=
			1.
		\end{align*}
		Equivalently
		\begin{align*}
            \lim_{n \rightarrow \infty}
            \frac{|\{ x \in W_n ~|~  {}^p b_x \neq b_x \}|}{|W_n|} = 1.
        \end{align*}
        Moreover, the complementary density decays at least exponentially in $n$.
	\end{Theorem}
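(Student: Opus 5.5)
We reduce \autoref{T:Main} to a pattern-avoidance statement and then use a probabilistic count.

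\textbf{Step 1: a single seed.} We need one element with torsion in every characteristic $p$. For each prime $p$ we fix a rank $k=k(p)$ and an element $y_p\in W_{k}$ with ${}^p b_{y_p}\neq b_{y_p}$. Such elements exist: for $p=2$ there are classical small-rank examples, and for general $p$ one can use Williamson's torsion explosion families, or the known examples in types $B/C$ and $A$. The rank $k$ depends on $p$, but $p$ is fixed throughout, so $k$ is a constant.

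\textbf{Step 2: inheritance via interval pattern embeddings.} If an interval $[u,y_p]$ embeds, in the sense of interval pattern embeddings (Woo--Yong), into an interval $[v,x]$ in $W_n$, then the Schubert variety $\overline{BxB/B}$ near the $T$-fixed point $v$ is smoothly locally isomorphic (up to a product with affine space) to $\overline{By_pB/B}$ near $u$. Stalks of $\mathcal{IC}^{\Z}$ are local invariants, and the analogous statement holds for costalks. Hence $p$-torsion in a (co)stalk of $\mathcal{IC}^{\Z}_{y_p}$ at $u$ forces $p$-torsion in $\mathcal{IC}^{\Z}_x$ at $v$. So it suffices to show that almost every $x\in W_n$ contains $y_p$ as a pattern. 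In type $A$ this means permutation pattern containment, and in types $B/C/D$ it means signed-permutation containment, together with a suitable $u$ realising the torsion. The equivalence ${}^pb_x\neq b_x$ follows from the standard fact that $p$-torsion-freeness of all (co)stalks is equivalent to $\mathcal{IC}_x^{\Z}\otimes\F_p$ being parity, that is, to ${}^pb_x=b_x$.

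\textbf{Step 3: counting.} We split the $n$ positions into $\lfloor n/k\rfloor$ disjoint consecutive blocks of length $k$. For a uniformly random (signed) permutation, the event that the restriction to a block has relative pattern equal to $y_p$ occurs with probability at least $1/|W_k|$ per block. Such an occurrence must also be an interval pattern embedding, which may require that the block's values be consecutive. To arrange this we choose blocks of positions and values together. Concretely, we condition on the set partition of values and use the fact that, given the value sets assigned to the blocks, the block patterns are independent and uniform. We then demand the stronger event that block $i$ maps onto value block $i$ with pattern $y_p$, extending by the identity elsewhere in the local picture. The probability of avoiding this in all blocks is at most $(1-c_p)^{n/k}$ for some constant $c_p>0$, which decays exponentially in $n$.

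\textbf{Main obstacle.} The delicate point is the second half of Step 3. Plain pattern containment is not enough for an interval pattern embedding, since the Woo--Yong condition demands compatible embeddings of both $u$ and $y_p$. The naive "consecutive values" event has probability decaying like $1/\binom{n}{k}$ per block, which is far too small. I would try to use the flexibility in choosing $v$: given $x$ containing $y_p$ at positions $i_1<\dots<i_k$, define $v$ by replacing the entries of $x$ at those positions by the pattern $u$ and leaving the rest fixed. Then check the Woo--Yong length condition, $\ell(x)-\ell(v)=\ell(y_p)-\ell(u)$. This condition holds when no other entries have values or positions "between" the relevant ones, and such entries are exactly what changes $u$ would interact with. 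Conditioning on a block with a local gap condition then gives a probability bounded below independently of $n$, and this yields exponential decay via independence across blocks.
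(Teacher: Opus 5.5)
\textbf{The gap is in Step 3.} Once the seed and Step 2 are in place, the theorem rests on showing that the block occurrences you count actually give interval pattern embeddings, and you never show this. As written, Step 3 does three things in turn:
\begin{enumerate}
\item it counts patterns in blocks of consecutive positions;
\item it worries that the values may also have to be consecutive;
\item it imposes a ``stronger event'' that your own last paragraph concedes has probability of order $1/\binom{n}{k}$.
\end{enumerate}
The proposed repair is a ``local gap condition'' whose probability is ``bounded below independently of $n$''. That condition is never formulated or estimated, and it is never shown to be independent across blocks. So the exponential bound is not established. The missing idea is the parabolic coset factorisation of \autoref{L:IPE}, and it shows that no gap condition is needed at all.

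\textbf{How to close it.} Take a type $A$ seed $y_p\in\Sn_k$ and $K$ disjoint blocks of $k$ consecutive unsigned positions, so that $K$ grows linearly in the rank.
\begin{enumerate}
\item The adjacent transpositions inside the blocks generate a standard parabolic subgroup $W_I\cong\Sn_k^{K}$ in every classical type. It acts on positions by right multiplication.
\item Every $x$ factors uniquely as $x=z\,w_1'\cdots w_K'$, where $z$ is the minimal element of $xW_I$. Then $\ell(zw')=\ell(z)+\ell(w')$ for all $w'\in W_I$, and $[zu',zw']=z[u',w']$ for $u' \leq w'$ \cite[\S2.5]{BB08}.
\item Suppose $w_i'=y_p$, and let $v$ be obtained by replacing $w_i'$ with $u$. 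Then $x$ and $v$ agree off block $i$, $\ell(x)-\ell(v)=\ell(y_p)-\ell(u)$ holds automatically, and $[v,x]\cong[u,y_p]$.
\item The gap condition you propose is automatic. In type $A$, an entry outside the block can change the inversion count only if both its position and its value lie strictly between those of block entries, and no position lies strictly between consecutive positions.
\item The map $x\mapsto(z,w_1',\dots,w_K')$ is a bijection onto (minimal representatives of $W/W_I$)$\,\times\,\Sn_k^K$. Hence the block patterns are exactly uniform and independent, and the complementary density is at most $(1-1/k!)^{K}$.
\end{enumerate}
This is the mirror image of the paper's proof, which uses $x=w_1\cdots w_kz$ with $z\in{}^IW$, i.e.\ blocks of consecutive values rather than positions.

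\textbf{Step 2 after the repair.} Your Step 2 is then a legitimate alternative to \autoref{L:pKL}. Near $v$, the Schubert variety is an affine space times a neighbourhood of $u$ in the $\Sn_k$-Schubert variety of $y_p$, because the other blocks sit at smooth points of their Levi Schubert varieties. So integral (co)stalks transfer directly. The paper instead compares labelled moment graphs via Braden--MacPherson. That route also yields ${}^pa_{u,w}={}^pa_{v,x}$, which is what \autoref{T:Main2} needs.

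\textbf{The seed must be of type $A$.} For example, use Polo's example in $\Sn_{4p}$. A type $B/C$ seed placed in a block of signed positions away from the special node lies in a non-parabolic reflection subgroup, and there occurrences need not give interval embeddings. For instance, in $W(B_2)$ the reflection changing the sign of the second coordinate is $s_1s_0s_1$. Yet $[e,s_1s_0s_1]$ has six elements rather than two.
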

	
	In other words, eventually almost every intersection cohomology sheaf on a full flag variety admits $p$-torsion in its (co)stalks. 
	This result is in stark contrast with the current state of knowledge. 
	For example, the presence of $p$-torsion in the (co)stalks of intersection cohomology sheaves on full flag varieties has only been determined in low ranks (typically $n \leq 7$), where it exists only for small primes ($p \in \{2,3\}$) and is typically rare ($2$-torsion is present in $38$ of the $40320$ possible intersection cohomology sheaves for $SL_8$); see \cite{WB12}. 
	Furthermore, there are several families of partial flag varieties where it is known which intersection cohomology sheaves admit $p$-torsion in their (co)stalks; see \cite{BowmanDeVisscherHaziNorton-hermitian, Baine-cominuscule}. 
	Interestingly, we show, in \autoref{S:Partial}, that the analogue of \autoref{T:Main} in the parabolic setting admits various limits depending on the partial flag variety.  
	\\ 
	\par 
	The question of whether an intersection cohomology sheaf $\mathcal{IC}_x^{\Z}$ has $p$-torsion in its (co)stalks is equivalent to asking whether the corresponding parity sheaf $\mathcal{E}_x$ (with coefficients in a field $\Bbbk$ of characteristic $p$) is an intersection cohomology sheaf. 
	A less restrictive question one could ask is whether the parity sheaf $\mathcal{E}_x$ is perverse.  
	Our second, and stronger, main result states that for a fixed $p$, this is asymptotically almost never the case. 
	
	\begin{Theorem}\label{T:Main2}
		Fix a field $\Bbbk$ of characteristic $p>0$ or a complete local ring with residue field of characteristic $p$,  and a tower $\{ G_n\}$ of classical algebraic groups with Weyl groups $\{W_n\}$. 
		Then
		\begin{align*}
			\lim_{n \rightarrow \infty}
			\frac{|\{ x \in W_n ~|~ \mathcal{E}_x \text{ is not perverse}\}|}{|W_n|}
			=
			1.
		\end{align*}
		Moreover, the complementary density decays at least exponentially in $n$.
	\end{Theorem}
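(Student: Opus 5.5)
The plan is to combine three ingredients: a local obstruction, a propagation principle, and a pattern-density count.

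\emph{Local obstruction.} First I would fix, for the prime $p$ and each type (A, B/C, D), a small base element $w_0$ in a fixed small-rank Weyl group $W_{m}$ whose parity sheaf $\mathcal{E}_{w_0}$, over $\Bbbk$ of characteristic $p$, is not perverse. Equivalently, some $p$-Kazhdan--Lusztig polynomial ${}^p h_{y,w_0}$ has a coefficient in a degree violating the perversity bound. For small $p$ such witnesses come from known computations (e.g.\ \cite{WB12}). For general $p$ I would take them from Williamson's torsion explosion construction \cite{Williamson-explosion}, whose examples exhibit parity sheaves with non-perverse behaviour. The rank $m$ of the witness depends on $p$, but $p$ is fixed. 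Passing from a field to a complete local ring with residue field of characteristic $p$ is harmless, because parity sheaves reduce modulo the maximal ideal.

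\emph{Propagation along interval pattern embeddings.} Next I need a transfer lemma. If $w_0$ embeds as an interval pattern in $x$, in the sense of Billey--Braden or Woo--Yong, then the Schubert variety singularity at the relevant pair for $w_0$ appears, up to a smooth factor, as a slice of the singularity for $x$. The idea is to use the fact that parity sheaves restrict well to normal slices and $T$-fixed point neighbourhoods: the stalks of $\mathcal{E}_x$ at the embedded point then agree, up to shift, with those of $\mathcal{E}_{w_0}$ at $y$. This would give the inequality ${}^p h_{\iota(y),x} \neq h$ in a non-perverse degree. I expect this to be the main obstacle. Standard interval pattern embedding results are stated for Kazhdan--Lusztig polynomials or for Schubert varieties as varieties, and upgrading them to an equivariant-parity statement requires care. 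I would first try to realise the embedding geometrically as a product decomposition of a slice, say via a Levi or torus fixed-point argument, and then use that parity sheaves on products are external tensor products, which preserves non-perversity. In types B/C/D I would work with the signed-permutation versions of these patterns.

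\emph{Counting.} Finally, I would show that almost every $x \in W_n$ contains a given pattern $w_0$, with exponentially small exceptional density. Split $\{1,\dots,n\}$ into $\lfloor n/m \rfloor$ disjoint consecutive blocks of length $m$. The event that the restriction of $x$ to a block, meaning positions and values both in that block, is exactly $w_0$ has probability bounded below by a constant $c > 0$ depending only on $m$. To make the events independent, condition on the set of blocks that $x$ maps onto themselves, or more simply use a stronger block-local embedding. Then the probability of avoiding the pattern in all blocks is at most $(1-c)^{n/m}$.

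One caveat is that a raw block pattern must be an \emph{interval} pattern. I would choose $x$ with the block acting as $w_0$ and elsewhere arbitrary, and check that a consecutive-block occurrence gives an interval pattern embedding, with $y$ extended by the same outer permutation. Consecutive occurrences of this kind do give Bruhat-interval isomorphisms. This yields the exponential decay and proves \autoref{T:Main2}.
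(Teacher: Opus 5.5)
\textbf{Counting.} This step fails as stated, and it is a genuine gap. Fix a block of $m$ positions and the same $m$ values. The event that $x\in\Sn_n$ maps these positions onto these values and induces $w_0$ there has probability $(n-m)!/n! = 1/\bigl(m!\binom{n}{m}\bigr)$, which is not bounded below by a constant. A union bound over the $\lfloor n/m\rfloor$ blocks then shows that such two-sided block occurrences have density tending to $0$, the opposite of what you need.

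The point you are missing is that consecutiveness is only needed on one side; this is exactly \autoref{L:IPE}. Work inside a type-$A$ standard parabolic of rank $r$, where $r=n$ in type $A_n$ and $r=n-1$ in types $B_n$, $C_n$, $D_n$. Take $W_I\cong\Sn_m\times\dots\times\Sn_m$ with $k=\lfloor (r+1)/m\rfloor$ factors, and write $x=vz$ with $v=w_1\cdots w_k\in W_I$ and $z\in{}^IW$.
\begin{enumerate}
\item Since $(v,z)\mapsto vz$ is a bijection $W_I\times{}^IW\to W$, the $w_i$ are independent and uniform in $\Sn_m$. Hence exactly a proportion $(1-1/m!)^k$ of all $x$ have no $w_i$ equal to $w_0$; no conditioning is needed.
\item Nothing has to be imposed on $x$ outside the block. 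The element $z_i=(\prod_{j\neq i}w_j)z$ is a minimal coset representative for the $i$-th factor, and right multiplication by $z_i$ is a Bruhat isomorphism of that factor onto its coset. So $[u,w_0]$ interval pattern embeds into $[uz_i,x]$ automatically.
\end{enumerate}
In one-line notation, the entries whose values (or positions, depending on conventions) form the $i$-th block of consecutive integers must appear in the relative order $w_0$, while their positions (resp.\ values) are arbitrary. This also removes any need for type-$B$/$C$/$D$ witnesses or signed-permutation patterns, for which you give no source: type-$A$ seeds embed through the type-$A_{n-1}$ parabolic.

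\textbf{The seed.} Williamson's torsion explosion \cite{Williamson-explosion} detects torsion through degree-zero local intersection forms. It yields ${}^pb_x\neq b_x$, which is the phenomenon of \autoref{T:Main}, but it does not show that any parity sheaf fails to be perverse. The small-rank computations of \cite{WB12} cannot supply a witness for every $p$ either. What is actually needed is \autoref{L:Seed Perverse}: the $p=2$ example of \cite{LW17} in $\Sn_{15}$, and McNamara's generalised Kashiwara--Saito singularity \cite{McN18} in $\Sn_{5p}$ for odd $p$.

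\textbf{Propagation.} Your transfer step is only a plan, but for the parabolic embeddings above it can be completed along your lines. Take $x=w'z$ with $w'\in W_J$ and $z\in{}^JW$.
\begin{enumerate}
\item The projection $G/B\to G/P_J$ identifies the part of $\overline{Bx^{-1}B/B}$ lying over the cell $Bz^{-1}P_J/P_J$, compatibly with the stratifications, with an affine space times the Schubert variety $\overline{B_Jw'^{-1}B_J/B_J}$ of the Levi $L_J$, where $B_J=B\cap L_J$.
\item Restricting to this open set and using uniqueness of indecomposable parity sheaves gives equal stalks.
\item One passes back to $x$ via ${}^ph_{y,x}={}^ph_{y^{-1},x^{-1}}$.
\end{enumerate}
The paper instead obtains ${}^ph_{u,w_0}={}^ph_{uz_i,x}$ for all $u\le w_0$ from two inputs: Woo's isomorphism of Richardson varieties \cite{Woo-arbitrary}, which matches labelled moment graphs, and the Fiebig--Williamson theorem \cite{FW14} that the Braden--MacPherson algorithm computes stalks of $\Z_p$-parity sheaves. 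Either way, once a stalk in a non-perverse degree is transported, non-perversity of $\mathcal{E}_x$ follows.
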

	
	Again, this theorem is in stark contrast with the state of knowledge. 
	Very few non-perverse parity sheaves are known on full flag varieties \cite{LW17,McN18}.
	It follows from \cite{Baine-cominuscule} that the Whittaker parity sheaves which are Koszul dual to mixed tilting sheaves on minuscule and cominuscule partial flag varieties are perverse for all primes $p$.
	Moreover, parity sheaves in the category of $G(\C[\![t]\!])$-equivariant sheaves on the affine Grassmannian are perverse for $p$ sufficiently large; see \cite{JMW-tilting}. 
	\\
	\par 	 
    The proofs of these theorems utilise geometric and combinatorial methods.
    We begin by considering indecomposable parity sheaves (with coefficients in $\Z_p$) supported on Schubert varieties in finite flag varieties. 
    When we extend scalars from $\Z_p$ to $\Q_p$ these parity sheaves can decompose and their graded decomposition numbers encode whether $\mathcal{IC}_x^{\Z}$ has $p$-torsion in its (co)stalks, and whether $\mathcal{E}_x$ is not perverse. 
    The former occurs when the parity sheaf admits a non-trivial decomposition; the latter when the graded decomposition numbers are not integers. 
    These non-trivial decompositions are associated to certain singular Schubert varieties. 
    Using the notion of interval pattern embeddings, we explain how to embed these singularities into Schubert varieties in flag varieties for classical groups of larger rank. 
    Finally, using combinatorial techniques, we show that these singularities embed into almost every Schubert variety for a sufficiently large classical algebraic group. 
	\\

\noindent\textbf{Acknowledgements.}
We are grateful to Kevin Coulembier and Jensen O'Sullivan for helpful conversations, useful comments, and encouragement.
This paper is part of ARC Discovery Project DP250100762. 
DT acknowledges support from ARC Future Fellowship FT230100489, and notes that torsion seems to proliferate more reliably than most desirable phenomena.
 \\
\noindent\textbf{AI declaration:} AI was not used for the proof strategy or writing, but ChatGPT-6.0 and Claude Opus 5 were used for the final proofreading. 

\section{The groundwork}\label{S:Background}

	We briefly fix notation regarding various intersection cohomology sheaves, parity sheaves and $p$-Kazhdan--Lusztig bases. 
	\\
	\par 
    Let $G$ be a complex, connected classical algebraic group, with a fixed choice of Borel subgroup $B \subset G$ and maximal torus $T \subset B$.
    Associated to $G \supset B \supset T$ are the Weyl group $W$ and its simple reflections $S$.  
    When we speak of a tower of classical algebraic groups $\{G_n\}$, we mean a sequence of classical algebraic groups $\{G_n\}$, where $G_n$ has rank $n$, each $G_n$ is of the same type (e.g. $SL_{n+1}$, $Sp_{2n}$, $SO_{2n+1}$, $SO_{2n}$), and a sequence of embeddings that embed maximal tori into maximal tori and Borel subgroups into Borel subgroups. 
    If $W_n$ denotes the Weyl group of $G_n$, then the embeddings induce inclusions $W_{n-1} \hookrightarrow W_n$ that are compatible with the Coxeter structure. 
    Denote the length function by $\ell$ and the Bruhat order by $\leq$.
    When $n$ is clear from context or unimportant, we omit it from notation.  
    \\
    \par 
    The flag variety $G/B$ admits a stratification by $B$-orbits whose strata, the Schubert cells $BxB/B$, are indexed by elements $x \in W$.
    Write $i_x : BxB/B \hookrightarrow G/B$ for the inclusion.   
    \\
    \par  
    Let $D_{(B)}^b(G/B , \Z)$ denote the bounded derived category of sheaves of $\Z$-modules on $G/B$ that are constructible with respect to the Schubert stratification. 
    Its triangulated shift functor is denoted by $[1]$.
    The intersection cohomology sheaf (for the middle perversity) supported on $\overline{BxB/B}$ is $\mathcal{IC}_x^{\Z} := i_{x !*} \underline{\Z}_{BxB/B} [\ell(x)]$.  
    \\
    \par 
	Consider the $p$-modular system $(\Q_p , \Z_p , \F_p)$. 
	Define the simple intersection cohomology sheaf $\mathcal{IC}_x$ as $i_{x !*} \underline{\F_p}_{BxB/B} [\ell(x)]$.
	Let $\Parity(G/B , \Z_p)$ be the full subcategory of $D_{(B)}^b(G/B , \Z_p)$ of parity sheaves in the sense of \cite{JMW-parity}. 
	Up to isomorphism, there is a unique indecomposable, self-dual parity sheaf $\mathcal{E}_x$ whose support is $\overline{BxB/B}$.
    Let $i_y^* \mathcal{E}_x$ denote the stalk of $\mathcal{E}_x$ at $yB/B$. 
    The $p$-Kazhdan--Lusztig polynomial ${}^p h_{y,x}$ is defined as the Poincar\'{e} polynomial of the stalk $i_y^* \mathcal{E}_x$. 
    Namely
    \begin{align}\label{E:pKL}
    	{}^p h_{y,x} 
    	:= 
    	\sum_{j \in \Z } \rank H^j (i_y^* \mathcal{E}_x) v^{-\ell(y)-j}.
    \end{align}
    Denote by $\mathcal{E}_x^{\Q_p}$ and $\mathcal{E}_x^{\F_p}$ the corresponding indecomposable parity sheaves with coefficients in $\Q_p$ and $\F_p$ respectively. 
    Extending scalars from $\Z_p$ to $\Q_p$ produces a decomposition\footnote{Note that the decomposition is not canonical, but the multiplicities ${}^pa_{y,x,i}$ are independent of the decomposition. }
    \begin{align}\label{E:decomp}
    	\mathcal{E}_x \otimes_{\Z_p}^L \Q_p \cong \bigoplus_{y \in W, \, i \in \Z} (\mathcal{E}_y^{\Q_p} [i])^{\oplus {}^p a_{y,x,i}}
    \end{align}
    for some non-negative integers ${}^pa_{y,x,i}$ satisfying ${}^pa_{y,x,i} = 0$ for $y \not\leq x$, ${}^pa_{x,x,i}=0$ if $i \neq 0$ and ${}^pa_{x,x,i}=1$ if $i=0$. 
    Note that the self-duality of $\mathcal{E}_x$ implies ${}^p a_{y,x,i} = {}^p a_{y,x,-i}$ for all $i \in \Z$. 
    For more details see \cite[\S2.5]{JMW-parity}.  
    \\
	\par 
	The Hecke algebra $H$ associated to the Coxeter system $(W,S)$ is a $\Z[v,v^{-1}]$-algebra with standard basis $\{ \delta_x \,|\, x \in W \}$, see \cite{Tits69}, and Kazhdan--Lusztig basis $\{ b_x \,|\, x \in W \}$, see \cite{KL79}. 
	The regular representation of the Hecke algebra is isomorphic to the split Grothendieck group of $\Parity(G/B , \Z_p)$. 
	In particular
	\begin{align*}
		[\Parity(G/B , \Z_p)] 
		\tilde{\longrightarrow} 
		H, 
		&&
		[\mathcal{E}_x] 
		\longmapsto 
		{}^p b_x := \sum_{y \in W} {}^p h_{y,x} \, \delta_y,
	\end{align*}
	where $[\mathcal{E}[1]] = v[\mathcal{E}]$.
	The elements $\{ {}^p b_x \,|\, x \in W \}$ form the $p$-Kazhdan--Lusztig basis of $H$. 
	Note that the $p$-Kazhdan--Lusztig basis depends on the root datum of $G$, rather than just the Coxeter system $(W,S)$.
	Importantly, the decomposition in \autoref{E:decomp} implies: 
	\begin{align}\label{E:p-can in can}
		{}^p b_x = \sum_{y \in W} {}^pa_{y,x} \, b_y 
		~~~\text{ where }~~~
		{}^p a_{y,x} \in \Z[v+v^{-1}]
	\end{align}
	and we have ${}^pa_{x,x} = 1$ and ${}^pa_{y,x} =0$ for $y \not\leq x$.  
	In particular, ${}^pa_{y,x} = \sum_{i \in \Z} {}^pa_{y,x,i}~ v^i$. 
	\\
	\par 
 	The $p$-Kazhdan--Lusztig basis encodes the following information about $\mathcal{IC}_x$. 
 	
 	\begin{Lemma}\label{L: torsion equivalences}
 		The following are equivalent: 
 		\begin{enumerate}
 			\item ${}^p b_x = b_x$;
 			\item ${}^pa_{y,x} =0$ for all $y \neq x$; 
 			\item $\mathcal{E}_x^{\F_p} \cong \mathcal{IC}_x$;
 			\item the stalks and costalks of $\mathcal{IC}_x^{\Z}$ are free of $p$-torsion. 
 		\end{enumerate}
 	\end{Lemma}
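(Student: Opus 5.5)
The plan is to prove (a)$\Leftrightarrow$(b), then (b)$\Leftrightarrow$(c), and finally (c)$\Leftrightarrow$(d). The last equivalence is the standard parity-sheaf criterion for torsion, so I would rely on \cite{JMW-parity} there rather than reprove it.

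For (a)$\Leftrightarrow$(b) I would use \autoref{E:p-can in can} directly. The Kazhdan--Lusztig basis $\{b_y\}$ is a $\Z[v,v^{-1}]$-basis of $H$, so the coefficients ${}^pa_{y,x}$ are uniquely determined. Since ${}^pa_{x,x}=1$, we have ${}^pb_x=b_x$ exactly when every other coefficient vanishes.

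For (b)$\Leftrightarrow$(c) I would pass through the $\Q_p$-decomposition \autoref{E:decomp}.
\begin{enumerate}
\item In characteristic $0$ the decomposition theorem, in the form of \cite{JMW-parity}, gives $\mathcal{E}_y^{\Q_p}\cong\mathcal{IC}_y^{\Q_p}$. Hence (b) is equivalent to $\mathcal{E}_x\otimes^L_{\Z_p}\Q_p\cong \mathcal{IC}_x^{\Q_p}$.
\item Since $\mathcal{E}_x$ is parity, its stalks and costalks are free $\Z_p$-modules concentrated in degrees of a fixed parity. So the ranks of its stalk cohomology are unchanged under both $\otimes\Q_p$ and $\otimes\F_p$, and $\mathcal{E}_x\otimes^L_{\Z_p}\F_p\cong\mathcal{E}_x^{\F_p}$.
\item Consequently $\mathcal{E}_x^{\F_p}$ and $\mathcal{E}_x\otimes\Q_p$ have the same stalk and costalk dimensions. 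The intersection cohomology sheaves $\mathcal{IC}_x$ and $\mathcal{IC}_x^{\Q_p}$ are characterised by support and costalk degree conditions, namely $\dim$ stalk in degrees $<-\ell(y)$ for $y<x$, and the dual condition for costalks.
\item Suppose (b) holds. Then $\mathcal{E}_x^{\F_p}$ satisfies these same degree bounds, and it is indecomposable with the correct restriction to the open stratum. The Deligne-type characterisation then forces $\mathcal{E}_x^{\F_p}\cong \mathcal{IC}_x$.
\item Conversely, suppose (c) holds. Then the $\F_p$-stalk dimensions satisfy the strict bounds. By rank equality, so do the stalks of $\mathcal{E}_x\otimes\Q_p$, and likewise the costalks by self-duality. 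Hence $\mathcal{E}_x\otimes\Q_p$ is itself $\mathcal{IC}_x^{\Q_p}$, which gives (b).
\end{enumerate}

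For (c)$\Leftrightarrow$(d) I would argue as follows.
\begin{enumerate}
\item Assume (d). Then $\mathcal{IC}_x^{\Z_p}$ has free stalks and costalks, and these vanish in odd degrees because in characteristic $0$ they agree with the parity-vanishing Kazhdan--Lusztig data. So $\mathcal{IC}_x^{\Z_p}$ is parity and indecomposable, hence equal to $\mathcal{E}_x$. Reducing mod $p$ gives $\mathcal{IC}_x^{\Z_p}\otimes^L\F_p=\mathcal{E}_x^{\F_p}$.
\item By the freeness, the stalks of this reduction satisfy the strict degree bounds, so it is $\mathcal{IC}_x$.
\item Conversely, assume (c). Then $\mathcal{E}_x$ is a $\Z_p$-form whose reductions to both $\F_p$ and $\Q_p$ are IC sheaves. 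By the argument above, $\mathcal{E}_x\otimes\Q_p\cong\mathcal{IC}_x^{\Q_p}$.
\item The torsion-free $\Z_p$-sheaf $\mathcal{E}_x$ satisfies the $\mathcal{IC}$ support/cosupport conditions, so $\mathcal{E}_x\cong\mathcal{IC}_x^{\Z_p}$, using ${}^pa$-perversity for the $p^+$ and $p$ t-structures as in \cite{JMW-parity}. Its stalks and costalks are therefore free.
\item Finally, $p$-torsion in $\mathcal{IC}^{\Z}_x$ is detected after $\otimes\Z_p$.
\end{enumerate}

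The main obstacle is the careful handling of the integral t-structures in (c)$\Leftrightarrow$(d). The point is that $\mathcal{IC}^{\Z_p}_x$ is defined by intermediate extension, and for torsion-free objects the $p$ and $p^+$ perversities differ. I would first try to cite the torsion criterion in \cite[\S2--3]{JMW-parity} directly, and fall back on the rank comparison argument above if needed.
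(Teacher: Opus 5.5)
Your proof is correct, and it is more self-contained than the paper's. The paper treats (a)$\Leftrightarrow$(b) as clear and quotes $\mathcal{E}_x\otimes^L_{\Z_p}\F_p\cong\mathcal{E}_x^{\F_p}$ from \cite[\S2.5]{JMW-parity}. It then simply cites \cite[\S3]{WB12} for the equivalence of (a), (c), (d).

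You instead reprove the content of that citation from two facts:
\begin{enumerate}
\item The (co)stalks of $\mathcal{E}_x$ are free. So their ranks are unchanged by $\otimes\Q_p$ and $\otimes\F_p$, and, by Nakayama, a free (co)stalk vanishes iff its reduction does.
\item An intermediate extension is characterised by the strict bounds $i_y^*X\in D^{\le-\ell(y)-1}$ and $i_y^!X\in D^{\ge-\ell(y)+1}$ for $y<x$. This is the recollement criterion of Beilinson--Bernstein--Deligne (\S1.4), and it holds for the $p$ t-structure over $\Z_p$ just as over a field.
\end{enumerate}
You also pass (b)$\Leftrightarrow$(c) directly through the $\Q_p$-decomposition \autoref{E:decomp} and Krull--Schmidt, rather than through (a). The citation buys brevity. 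Your route shows that the lemma reduces to freeness of parity (co)stalks, parity vanishing of Kazhdan--Lusztig polynomials, and the recollement criterion.

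Some small points to fix:
\begin{enumerate}
\item The isomorphism $\mathcal{E}_x\otimes^L_{\Z_p}\F_p\cong\mathcal{E}_x^{\F_p}$ does not follow from freeness of (co)stalks alone. One needs the reduction to stay indecomposable, which uses completeness of $\Z_p$ and freeness of Hom spaces between parity sheaves. Attribute it to \cite{JMW-parity} rather than presenting it as a consequence.
\item In (d)$\Rightarrow$(c), justify that $\mathcal{IC}_x^{\Z_p}$ is indecomposable. A summand $\mathcal{F}$ supported on the boundary would, at a maximal stratum $y$ of its support, satisfy $i_y^*\mathcal{F}\cong i_y^!\mathcal{F}$. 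This complex would be concentrated both in degrees $<-\ell(y)$ and in degrees $>-\ell(y)$, hence zero.
\item Also in (d)$\Rightarrow$(c), state that $\mathcal{IC}_x^{\Z}\otimes\Z_p\cong\mathcal{IC}_x^{\Z_p}$ and $\mathcal{IC}_x^{\Z_p}\otimes\Q_p\cong\mathcal{IC}_x^{\Q_p}$, by flat base change.
\item In (c)$\Rightarrow$(d), re-deriving $\mathcal{E}_x\otimes\Q_p\cong\mathcal{IC}_x^{\Q_p}$ is unnecessary, and the phrase ``${}^pa$-perversity'' is not meaningful. You only need that freeness converts the $\F_p$-bounds into $H^j(i_y^*\mathcal{E}_x)=0$ for $j\ge-\ell(y)$ and $H^j(i_y^!\mathcal{E}_x)=0$ for $j\le-\ell(y)$.
\item Your closing remark is backwards. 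When the (co)stalks are free, the $p$- and $p^+$-intermediate extensions coincide; they differ only when torsion is present. That is why the $p^+$ t-structure never actually enters your argument.
\end{enumerate}
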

 	\begin{proof}
 		The equivalence of (a) and (b) is clear. 
 		By \cite[\S2.5]{JMW-parity} we have  $\mathcal{E}_x \otimes_{\Z_p}^L \F_p \cong \mathcal{E}_x^{\F_p}$. 
 		The equivalence of (a), (c) and (d) then follows from the previous isomorphism and \cite[\S3]{WB12}. 
 	\end{proof}
 	
 	We will also use the following numerical condition for determining when a parity sheaf is perverse.
 	
 	\begin{Lemma}\label{L: numerical non-perversity}
 		The indecomposable parity sheaf $\mathcal{E}_x$ is perverse if and only if ${}^p a_{y,x} \in \Z$ for all $y$. 
 	\end{Lemma}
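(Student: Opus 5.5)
The plan is to pass to $\Q_p$ coefficients, where the decomposition theorem makes everything transparent, and then descend to $\F_p$ (or to the given $\Bbbk$). Over $\Q_p$, each $\mathcal{E}_y^{\Q_p}$ is isomorphic to the intersection cohomology sheaf $i_{y!*}\underline{\Q_p}[\ell(y)]$, which is perverse. By \autoref{E:decomp},
\begin{align*}
\mathcal{E}_x \otimes^L_{\Z_p} \Q_p \cong \bigoplus_{y,i} \bigl(\mathcal{E}_y^{\Q_p}[i]\bigr)^{\oplus {}^pa_{y,x,i}}.
\end{align*}
A shifted perverse sheaf $\mathcal{F}[i]$ with $\mathcal{F}\neq 0$ is perverse only when $i=0$. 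It follows that $\mathcal{E}_x\otimes^L\Q_p$ is perverse if and only if ${}^pa_{y,x,i}=0$ for all $i\neq 0$, which is the same as saying ${}^pa_{y,x}\in\Z$ for all $y$.

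It remains to show that $\mathcal{E}_x$ (over $\Z_p$, equivalently $\mathcal{E}_x^{\F_p}$) is perverse if and only if its rationalisation is. I would work with the stalk and costalk characterisation of perversity for sheaves constructible along the Schubert stratification. With field coefficients, $\mathcal{F}$ is perverse if and only if $H^j(i_y^*\mathcal{F})=0$ for $j>-\ell(y)$ and $H^j(i_y^!\mathcal{F})=0$ for $j<-\ell(y)$, for every $y$.

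The key input is that for a parity sheaf $\mathcal{E}_x$ over $\Z_p$, the stalks $i_y^*\mathcal{E}_x$ and costalks $i_y^!\mathcal{E}_x$ are constant sheaves on the affine space $ByB/B$ whose cohomology is free over $\Z_p$ and concentrated in degrees of a fixed parity \cite{JMW-parity}. Freeness means that the graded ranks are unchanged under $\otimes^L\Q_p$ and under $\otimes^L\F_p$; by the same reasoning this holds for any field $\Bbbk$ of characteristic $p$, or any complete local ring with such a residue field. The stalk and costalk vanishing conditions depend only on these graded ranks. Combining this with $\mathcal{E}_x\otimes^L\F_p\cong\mathcal{E}_x^{\F_p}$ (as used in the proof of \autoref{L: torsion equivalences}), perversity over $\F_p$, over $\Z_p$ and over $\Q_p$ are all equivalent.

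The step needing the most care is the integral one. Over $\Z_p$ one should use the appropriate perverse t-structure, either $p^+$ or $p$; for free stalks and costalks these agree and are detected by rank conditions. If this is awkward, I would simply interpret "perverse" via the field coefficients $\F_p$ or $\Bbbk$, where the rank argument applies directly.
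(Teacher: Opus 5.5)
Your proof is correct, but it takes a different route from the paper's. The paper stays with $\Z_p$-coefficients at the sheaf level. It uses the stalk/costalk criterion over $\Z_p$ and converts the costalk condition into the stalk condition via Verdier self-duality of $\mathcal{E}_x$, so that perversity becomes ${}^p h_{w,x}\in\Z[v]$ for all $w$. It then moves between ${}^p h$ and ${}^p a$ through ${}^p h_{y,x}=\sum_w {}^p a_{w,x}h_{y,w}$, by downward induction using $h_{y,w}\in\Z[v]$ and $h_{w,w}=1$, and finishes with $\Z[v]\cap\Z[v+v^{-1}]=\Z$.

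You instead read the answer off the rational decomposition \autoref{E:decomp}: a sum of shifted nonzero perverse sheaves is perverse iff all shifts vanish. You then transfer perversity between $\Z_p$ and $\Q_p$, using that parity sheaves have free stalk and costalk cohomology and that extension of scalars commutes with $i_y^*$ and $i_y^!$.

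Your argument is more conceptual. It avoids the Kazhdan--Lusztig polynomial bookkeeping and the self-duality step, since the $!$-parity condition gives costalk freeness directly. It also makes transparent that non-integrality of ${}^pa_{y,x}$ is literally the presence of nonzero shifts in the rational decomposition. The paper's argument stays at the level of characters in $H$, which fits the rest of the paper: the same unitriangularity induction is later used to transport ${}^pa_{u,w}$ along interval pattern embeddings. Both rest on the same characteristic-zero input, namely that $\mathcal{E}_y^{\Q_p}$ is the intersection cohomology sheaf. The paper uses this through its character $b_y$, you through its perversity.

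Your hedging about $p$ versus $p^+$ is unnecessary. The standard middle-perversity t-structure on $D^b_{(B)}(G/B,\Z_p)$ is defined by exactly the stalk/costalk vanishing conditions you state, so your rank argument applies verbatim over $\Z_p$ and no fallback to field coefficients is needed.
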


     \begin{proof}
 		Recall that a sheaf $\mathcal{F}$ in $D_{(B)}^b(G/B , \Z_p)$ is perverse if and only if for all $w \in W$ we have $H^j(i_w^* \mathcal{F}) = 0$ for all $j > -\ell(w)$ and $H^j(i_w^! \mathcal{F}) = 0$ for all $j < -\ell(w)$. 
 		For $\mathcal{E}_x$ the stalk condition is equivalent to ${}^p h_{w,x} \in \Z[v]$, by \autoref{E:pKL}. 
 		The costalk condition is also equivalent to ${}^p h_{w,x} \in \Z[v]$ by Verdier duality and the self-duality of $\mathcal{E}_x$. 
 		Hence $\mathcal{E}_x$ is perverse if and only if ${}^p h_{w,x} \in \Z[v]$ for all $w \in W$. 
 		Let $h_{y,w}$ denote the classical Kazhdan--Lusztig polynomial, i.e. the polynomials defined by the equality $b_w = \sum_y h_{y,w} \, \delta_y $. 
        Equation \autoref{E:p-can in can} implies ${}^p h_{y,x} = \sum_w  {}^p a_{w,x} \, h_{y,w}$.
 		The classical Kazhdan--Lusztig polynomials satisfy the following properties $h_{y,w} = 0$ when $y \not \leq w$, $h_{w,w} =1$, and $h_{y,w} \in \Z[v]$.
        In particular, if ${}^p a_{y,x} \in \Z$ for all $y$, then ${}^p h_{w,x} \in \Z[v]$ for all $w$. 
        Conversely, if ${}^p h_{w,x} \in \Z[v]$ for all $w$ then downward induction on $y$, using the unitriangularity of Kazhdan--Lusztig polynomials, shows ${}^p a_{y,x} \in \Z[v]$.
        But ${}^p a_{y,x} \in \Z[v+v^{-1}]$, so ${}^p a_{y,x} \in \Z$ for all $y$. 
 	\end{proof}

\section{The seeds}\label{S:Seed}

    To apply our method of constructing infinitely many intersection cohomology sheaves whose (co)stalks contain $p$-torsion, or infinitely many non-perverse parity sheaves, we first require the existence of a single sheaf with either of these properties. 
    To that end, we note the following two lemmata.  
    Throughout $\Sn_m$ denotes the symmetric group on $m$ letters. 
    
	\begin{Lemma}\label{L:Seed Torsion}
		For every prime $p$, there exists a minimal integer $T(p)$ and an element $w \in \Sn_{T(p)}$ such that the (co)stalks of $\mathcal{IC}_w^{\Z}$ contain $p$-torsion. 
		Moreover, one has $T(p) \leq 4p$. 
	\end{Lemma}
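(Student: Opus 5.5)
We need an explicit element $w$ with $p$-torsion in some symmetric group of size at most $4p$. Once such a $w$ exists, minimality of $T(p)$ is automatic: the set of $m$ admitting such an element is nonempty, so it has a least element. The plan is therefore to exhibit a witness in $\Sn_{4p}$ and to reduce its detection to \autoref{L: torsion equivalences}. The natural source of this witness is the known family of $p$-torsion examples in type $A$ whose size is linear in $p$. I would take these from the literature on $p$-canonical bases for $SL_n$, e.g.\ the examples of Williamson (and Braden's original $2$-torsion example in $SL_8$, see \cite{WB12}). The idea is to build a Bott--Samelson-type resolution, or a Soergel-bimodule computation, in which a local intersection form equals $p$ times a unit.

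Concretely, I would proceed as follows. First, fix a reduced expression $\underline{w}$ and look at the Bott--Samelson sheaf $\mathcal{E}(\underline{w})$ over $\Z_p$. Its decomposition into indecomposable parity sheaves is governed by the local intersection forms at the torus fixed points $y\le w$. By \cite{JMW-parity} together with \autoref{L: torsion equivalences}, it suffices to find $w$ and $y<w$ with two properties. Over $\Q$, the $y$-isotypic part of $\mathcal{E}(\underline w)$ in degree $0$ contains no summand $\mathcal{E}_y$ coming from $b_w$; this means the relevant Kazhdan--Lusztig coefficient of $b_y$ in $b_{\underline w}$ equals the rank of the form. Meanwhile, the intersection form has determinant divisible by $p$. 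Then ${}^pa_{y,w}\neq 0$, so ${}^pb_w\neq b_w$.

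Second, I would choose the element. For type $A$ I would use an element of $\Sn_{2k}$ built from two interleaved ``staircase'' patterns, essentially the one attached to a Grassmannian or two-row situation where the rank-one local intersection form at the relevant fixed point is computed by a binomial coefficient or a product of roots. Arrange that this form equals a single integer such as $\binom{2k'}{k'}$ or $k$, up to sign. One then picks the parameter so that $p$ divides that integer while $2k\le 4p$; for instance, take $k=p$ or $k=2p$. Computing the form amounts to a localisation (nil-Hecke) calculation: sum, over the subexpressions of $\underline w$ that evaluate to $y$, products of inverse roots. Restricted to a simple family, this telescopes to an explicit integer.

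The main obstacle is this explicit computation of the intersection form, together with the verification that no smaller-degree summands interfere, i.e.\ that the degree-$0$ form really is the obstruction. My first attempt would be to use the parabolic (Grassmannian) reduction, where $p$-Kazhdan--Lusztig polynomials are known through \cite{BowmanDeVisscherHaziNorton-hermitian} or via the combinatorics of Temperley--Lieb type. There, torsion in the two-row (Temperley--Lieb) setting appears as soon as the Jones--Wenzl coefficients, quantum binomials at $q=1$ (ordinary binomials), are divisible by $p$. That happens for $n$ of order $2p$ strands, hence an element of $\Sn_{m}$ with $m\le 4p$. I would then pull the parabolic torsion back to the full flag variety via the smooth projection $G/B\to G/P$, which preserves $p$-torsion in stalks. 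This gives the bound $T(p)\le 4p$.
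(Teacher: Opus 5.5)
\textbf{There is a genuine gap.} You never produce, or precisely cite, a witness $w\in\Sn_{4p}$. Everything rests on the assertion that ``the relevant intersection form equals a binomial coefficient'', and you give no $w$, no $y$, no reduced expression and no computation to back it. The one concrete route you commit to cannot work. Schubert varieties in type $A$ Grassmannians carry no $p$-torsion for any $p$. This follows, e.g., from Zelevinsky's small resolutions, or from \cite{BowmanDeVisscherHaziNorton-hermitian}; see also the final remark of \autoref{S:Partial}, which gives ${}^pc_x=c_x$ and ${}^pd_x=d_x$ for good primes, and every prime is good in type $A$. So there is no parabolic torsion to pull back along $G/B\to G/P$, even though that pull-back step is itself fine.

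The Jones--Wenzl obstruction you invoke belongs to $SL_2$-tilting theory, i.e.\ to the anti-spherical module of the affine Weyl group of type $\tilde A_1$, or to two-colour Soergel calculus. It is not a statement about finite type $A$ Schubert varieties. Taken at face value, your criterion would already produce $2$-torsion from $JW_2$ and $3$-torsion from $JW_3$ in very small symmetric groups. But there is no torsion at all in $\Sn_m$ for $m\le 7$ (\autoref{S:data}).

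\textbf{What is missing} is a specific singularity known to carry $p$-torsion. The paper supplies it simply by citation: Polo's generalisation of the Kashiwara--Saito singularity, as explained in \cite[\S1.5]{Williamson-hecke-category}. Consider the variety of $p\times p$ matrices $A_1,A_2,B_1,B_2$ with $B_iA_i=0$, $\rank\begin{pmatrix}A_2\\A_1\end{pmatrix}\le 1$ and $\rank(B_1~B_2)\le 1$. It is smoothly equivalent to a singularity of a Schubert variety in the flag variety of $SL_{4p}$, and it yields $p$-torsion in the (co)stalks of the corresponding $\mathcal{IC}_w^{\Z}$. Then \autoref{L: torsion equivalences}, together with the well-ordering argument for minimality of $T(p)$ that you already give, finishes the proof.

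Your Bott--Samelson intersection-form framework is a reasonable tool for verifying such an example by hand. Note, though, that a rank drop mod $p$ only shows ${}^pb_z\neq b_z$ for some $z\le w$; that suffices for existence but should be stated. Without the concrete singularity and its computation, the framework alone does not establish $T(p)\le 4p$.
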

	
	\begin{proof}
		This follows from Polo's generalised Kashiwara--Saito singularity as explained in \cite[\S1.5]{Williamson-hecke-category}, together with \autoref{L: torsion equivalences}. 
		In particular, the variety $Y$ of linear maps $A_1, A_2 ,B_1, B_2 : \C^p \rightarrow \C^p$ satisfying
		\begin{align*}
			B_i A_i =0 ~\text{with}~ i \in \{1,2\},
			&&
			\rank \begin{pmatrix} A_2 \\ A_1 \end{pmatrix} \leq 1,
			&&
			\rank \begin{pmatrix} B_1 ~ B_2 \end{pmatrix} \leq 1,
		\end{align*}
		is smoothly equivalent to a singularity in a Schubert variety in the flag variety of $SL_{4p}$, which gives rise to $p$-torsion in the (co)stalks of the corresponding intersection cohomology sheaf. 
	\end{proof}
	
	\begin{Lemma}\label{L:Seed Perverse}
		For every prime $p$, there exists a minimal integer $P(p)$ and an element $w \in \Sn_{P(p)}$ such that $\mathcal{E}_w$ is not perverse. 
		Moreover, one has $P(2) \leq 15$, and $P(p) \leq 5p$ when $p \neq 2$.
	\end{Lemma}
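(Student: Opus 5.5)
We will reduce the statement, via \autoref{L: numerical non-perversity}, to exhibiting for each prime $p$ an element $w$ of a small symmetric group and some $y < w$ such that ${}^pa_{y,w} \notin \Z$. Since ${}^pa_{y,w} \in \Z[v+v^{-1}]$ has non-negative coefficients and is symmetric, it suffices to find $y$ with ${}^pa_{y,w,i} \neq 0$ for some $i \neq 0$. Existence of such $w$ then gives a minimal $P(p)$ by well-ordering. The task is therefore to bound $P(p)$ by explicit examples.

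For $p=2$ we will use the known non-perverse parity sheaf in type $A_{14}$, i.e.\ in $\Sn_{15}$, found by Libedinsky--Williamson \cite{LW17} from an explicit computation of $2$-Kazhdan--Lusztig polynomials. There one has an element $w$ and $y$ for which ${}^2 h_{y,w}$ has a term $v^{-1}$, equivalently a nonconstant ${}^2a_{y,w}$. We will cite this directly and translate it through \autoref{L: numerical non-perversity}.

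For odd $p$ we will adapt the seed in \autoref{L:Seed Torsion}. This is the step we expect to be the main obstacle. The plan is to use the construction in \cite{McN18} (or the generalised Kashiwara--Saito family in \cite[\S1.5]{Williamson-hecke-category}), which enlarges the Kashiwara--Saito-type singularity living in $SL_{4p}$ by an extra block of size $p$. We expect the resulting singularity in a Schubert variety of the flag variety of $SL_{5p}$ to have its $p$-torsion appear in a cohomological degree away from the perverse degree. Concretely, in the smoothly equivalent local model we would compute the stalk at the most singular point over $\Z_p$. We would aim to show that the first $p$-torsion occurs in degree $-\ell(y)+1$ or further out. That forces a summand $\mathcal{E}_y^{\Q_p}[\pm i]$ with $i \neq 0$ in \autoref{E:decomp}, so ${}^pa_{y,w}$ contains $v+v^{-1}$.

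Since $p$ is odd, the relevant torsion comes from a $\Z/p$ cokernel of an intersection form of rank $p$. We would identify this as a degenerate intersection form in a non-middle degree, using the decomposition theorem approach of \cite{JMW-parity} for a resolution. If this direct computation proves too hard, the fallback is to quote the explicit non-perverse examples of \cite{McN18} in type $A_{5p-1}$. Minimality of $P(p)$ is only asserted as existence of a minimum, so no lower bound needs proof.
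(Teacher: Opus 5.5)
Via your fallback, this is exactly the paper's proof: it cites \cite{LW17} for $p=2$ (an example in $\Sn_{15}$) and McNamara's generalisation of the Kashiwara--Saito singularity \cite[\S4]{McN18} for odd $p$ (an explicit $w\in\Sn_{5p}$ whose canonical summand in the pushforward along a resolution is shown to be non-perverse), and the existence of a minimal $P(p)$ then follows from well-ordering. Present that citation as the proof itself, because the direct computation you sketch does not work as written: the step ``first $p$-torsion in degree $-\ell(y)+1$ forces a shifted summand $\mathcal{E}^{\Q_p}_y[\pm i]$'' is unjustified and essentially vacuous (stalks of $\mathcal{E}_w$ over $\Z_p$ are free, and stalks of $\mathcal{IC}^{\Z}_w$ at $y<w$ vanish in degrees $>-\ell(y)$); a shifted summand really corresponds to an intersection form of a parity resolution in a non-middle degree whose rank drops mod $p$; and the $SL_{4p}$ torsion seed only gives ${}^pb_w\neq b_w$, which is weaker than non-perversity.
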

	
	\begin{proof}
		When $p=2$ this is the main result of \cite{LW17}.
		When $p>2$ this follows from McNamara's further generalisation of the Kashiwara--Saito singularity as explained in \cite[\S 4]{McN18}.
		We briefly recall the details.
		Let $J$ denote the $p \times p$ anti-diagonal matrix. 
		McNamara shows (in the special case where $d=1$ and $l=3$ in McNamara's notation) that the variety $Y$ of linear maps $A_1, A_2, A_3, B_1, B_2, B_3 : \C^p \rightarrow \C^p$ satisfying
		\begin{align*}
			B_i J A_i =0 ~\text{with}~ i \in \{1,2,3\},
			&&
			\rank \begin{pmatrix} A_3 \\ A_2 \\ A_1 \end{pmatrix} \leq 1,
			&&
			\rank \begin{pmatrix} B_1 ~ B_2 ~ B_3\end{pmatrix} \leq 1,
		\end{align*}
		arises as a slice to a Schubert variety associated to an explicit permutation $w \in \Sn_{5p}$. 
		By showing that a canonical, indecomposable direct summand of the pushforward of the constant sheaf (with coefficients in $\Z_p$) along any resolution of singularities of $Y$ is not perverse, McNamara deduces that the parity sheaf $\mathcal{E}_w$ is not perverse. 
	\end{proof}
	
	Suppose there is $p$-torsion in the (co)stalks of $\mathcal{IC}_w^{\Z}$ (resp. $\mathcal{E}_w$ is not perverse). 
	Then, by \autoref{L: torsion equivalences} (resp. \autoref{L: numerical non-perversity}), it follows that ${}^p b_w \neq b_w$ and there exists a permutation $u<w$ such that ${}^p a_{u,w} \neq 0$ (resp. ${}^p a_{u,w} \notin \Z$). 
	We call the Bruhat interval $[u,w]$ a $p$-torsion seed (resp. non-perverse seed).
	More generally, either a $p$-torsion seed or a non-perverse seed will be referred to as a seed. 

	\begin{Example}
		When $p=3$ in \autoref{L:Seed Perverse} the relevant permutations, in one-line notation, are  
		\begin{align*}
			w= c~3~2~f~b~9~a~8~6~7~5~1~e~d~4,
			&&
			u= 3~2~1~c~b~a~9~8~7~6~5~4~f~e~d
		\end{align*}
		where $a=10$, $b=11$, \dots, $f=15$. 
	\end{Example}

\section{Propagating the seeds}\label{S:Embedding}

	In this section we recall the notion of a generalised interval pattern embedding, as introduced in \cite{Woo-arbitrary}. 
	We then show how to produce interval pattern embeddings of standard parabolic subgroups into Weyl groups. 
	This allows us to embed the `seed' singularities of \autoref{S:Seed} into large Schubert varieties. 
	We then show that this can be done for almost every sufficiently large Schubert variety.    
	\\
	\par 
	Fix a connected reductive algebraic group $G$, and a choice of Borel subgroup $B$ and maximal torus $T$ such that $T \subseteq B \subseteq G$.  
	Let $R$ be the root system of $G$ arising from the choice of maximal torus $T$, and denote the positive (resp. negative) roots arising from $B$ by $R_+$ (resp. $R_-$). 
	We denote by $V$ the real inner product space spanned by the root system $R$. 
	When we wish to emphasise this data, along with the Weyl group, we write $(W,R, V)$. 
	\\
	\par 
	The following notions are from \cite[\S 2]{Woo-arbitrary}. 
	A subsystem embedding $\iota$ of $(W',R', V')$ into $(W,R, V)$ is an injective linear map $\iota: V' \hookrightarrow V$, such that $R' \cong \iota(V') \cap R$. 
	Such a map $\iota$ identifies $W'$ with the subgroup generated by the reflections corresponding to roots in $\iota(R')$. 
	Denote the inversion set $R_+ \cap xR_-$ of $x \in W$ by $\Inv(x)$, and recall that $x \in W$ is determined by $\Inv(x)$.
	Then define $\phi: W \rightarrow W'$ so that $\phi(x)$ is the unique element satisfying $\Inv(\phi(x)) = \iota^{-1} (\Inv(x) \cap \iota(R_+'))$. 
	Note that $\phi$ evidently depends on $\iota$, though we omit this from notation.
	An element $w \in W'$ is said to pattern embed in $x \in W$ if $\phi(x) = w$.
	Note that this generalises the usual type $A$ pattern embedding to arbitrary types. 
	Now fix $u,w \in W'$ with $u \leq w$ and $x,y  \in W$ with $y \leq x$. 
	Then $[u,w]$ is said to interval pattern embed into $[y,x]$ when the following three conditions are satisfied:
	(1) $\phi(x) = w$ and $\phi(y)=u$; 
	(2) $x$ and $y$ are in the same right $\iota(W')$-coset; and 
	(3) the intervals $[u,w]$ and $[y,x]$ are isomorphic as posets. 
	Note that $[u,w]$ and $x$ determine $y$, so we may speak of embedding an interval $[u,w]$ in $x$. 
	\\
	\par 
	Suppose that we have a seed giving rise to $p$-torsion in the (co)stalks of an intersection cohomology sheaf, as in \autoref{L:Seed Torsion}, or a non-perverse parity sheaf, as in \autoref{L:Seed Perverse}. 
	An interval pattern embedding produces other Schubert varieties whose intersection cohomology sheaf or parity sheaf exhibits the same behaviour. 
	
	\begin{Lemma}\label{L:pKL}
		Let $G$ be a classical algebraic group with Weyl group $W$ and $u,w \in \Sn_m$. 
		If $[u,w]$ interval pattern embeds in $[y,x]$ where $x,y \in W$, then ${}^p h_{u,w} = {}^p h_{y,x}$. 
	\end{Lemma}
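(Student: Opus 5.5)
We argue geometrically: interval pattern embeddings induce isomorphisms of slices, and these slice isomorphisms carry parity sheaves to parity sheaves. The basic input is Woo's result \cite[\S 2]{Woo-arbitrary}: if $[u,w]$ interval pattern embeds in $[y,x]$ via a subsystem embedding $\iota$, then the Kazhdan--Lusztig slices agree. Precisely, the intersection of the Schubert variety $\overline{BwB/B}$ (in the flag variety of $SL_m$) with the opposite Schubert cell through $uB/B$ is isomorphic, as a $T$-stable stratified variety, to the corresponding slice of $\overline{BxB/B}$ at $yB/B$ in $G/B$. The strata are indexed compatibly through the poset isomorphism $[u,w]\cong[y,x]$. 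Woo proves this over $\C$ by exhibiting the slices as products of root subgroups indexed by inversion sets, using conditions (1)--(3) together with $\Inv(\phi(\cdot))=\iota^{-1}(\Inv(\cdot)\cap\iota(R'_+))$. I would first check that his construction really gives an isomorphism of stratified varieties and not merely an isomorphism of the underlying varieties. The point is that the $B$-orbit strata $BzB/B\cap (\text{slice})$ for $z\in[y,x]$ correspond to $BvB/B\cap(\text{slice})$ for $v\in[u,w]$, which follows since the isomorphism is built from the Bruhat-compatible identification of the cosets.

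Next I would use the local nature of parity sheaves. Let $N_{y,x}$ denote the slice of $\overline{BxB/B}$ at $y$. By the standard product structure, a neighbourhood of $yB/B$ in $\overline{BxB/B}$ is isomorphic to $(ByB/B)\times N_{y,x}$, and this is compatible with the stratifications. The restriction of $\mathcal{E}_x$ to this neighbourhood is therefore the pullback of a parity complex on $N_{y,x}$ (suitably shifted). That complex is indecomposable in the $\Z_p$-linear category, because $\mathcal{E}_x$ is indecomposable near the attracting fixed point. This uses the $\C^*$-contracting action on the slice and the criterion of \cite[\S2]{JMW-parity}: the endomorphism ring of a parity complex on a contracting slice is local iff its restriction to the open stratum is, and parity complexes are determined by their restriction to the open stratum up to the uniqueness theorem. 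The same holds for $\mathcal{E}_w$ on $N_{u,w}$. Under the isomorphism $N_{u,w}\cong N_{y,x}$, both become the unique indecomposable parity complex on the slice extending the (shifted) constant sheaf on the open stratum. They are therefore isomorphic, with matching normalising shifts, since $\ell(x)-\ell(y)=\ell(w)-\ell(u)=\dim N$ from the poset isomorphism.

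Finally, I would compare stalks. The stalk $i_y^*\mathcal{E}_x$ equals the stalk at the $T$-fixed point of the slice complex, up to the shift by $\dim ByB/B=\ell(y)$, and similarly for $i_u^*\mathcal{E}_w$ with $\ell(u)$. The normalisation $v^{-\ell(y)-j}$ in \autoref{E:pKL} is designed exactly to absorb this shift. Hence $\rank H^j$ match after reindexing, and ${}^p h_{u,w}={}^p h_{y,x}$.

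I expect the main obstacle to be the first step: making sure Woo's isomorphism is stratum-preserving and valid in the generality of a classical group $G$ with $W'=\Sn_m$ embedded via $\iota$, including the identification of the root subgroups for $\iota(R')$ inside $G$. This includes checking that the slice of the type $A$ Schubert variety in $SL_m/B$ is the same as the slice in $L/B_L$ for the subgroup $L$ generated by the root subgroups of $\iota(R')$, where the root data may differ (e.g.\ $SL$ versus $GL$-type). I would handle this by noting that parity sheaves on flag varieties depend only on the isomorphism type of the stratified slices, which is insensitive to central isogeny.
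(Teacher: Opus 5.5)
\textbf{The gap is in your second step.} Everything you invoke from \cite[\S2]{JMW-parity} rests on the vanishing of $\operatorname{Hom}$ between $*$-even and $!$-odd complexes. That includes surjectivity of restriction maps on endomorphism rings (which would give indecomposability of the restriction) and the uniqueness theorem. This vanishing in turn needs JMW's standing hypothesis that strata have no odd cohomology for the relevant local systems.

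On a Kazhdan--Lusztig slice with its induced stratification this hypothesis fails. The strata are the open pieces $BvB/B\cap B_-yB/B$, and already $BsB/B\cap B_-B/B\cong\C^*$. Passing to $T$-equivariant sheaves does not fix this in general. For example, the open stratum $Bw_0B/B\cap B_-B/B$ of the slice in $SL_3/B$ is $\{(a,b,c)\in\C^3 \mid b\neq 0,\ b\neq ac\}$, which carries the non-constant $T$-invariant unit $1-ac/b$. You also cannot restrict within the JMW framework on the flag variety itself, since the neighbourhood $\C^{\ell(y)}\times N_{y,x}$ is not a union of Schubert cells.

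Consequently, the ``unique indecomposable parity complex on the slice extending the constant sheaf'' is not available to you. The indecomposability of $\mathcal{E}_x|_{N_{y,x}}$ is asserted, not proved, and the criterion about contracting slices you attribute to \cite[\S2]{JMW-parity} is not there. For $\mathcal{IC}$ sheaves, which are intrinsic to the stratified slice, your argument would go through. Parity sheaves are not known to be intrinsic in this way, so even a stratum-preserving isomorphism $N_{u,w}\cong N_{y,x}$ does not by itself identify the restrictions of $\mathcal{E}_w$ and $\mathcal{E}_x$.

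\textbf{How the paper avoids this.} It makes the comparison combinatorial.
\begin{enumerate}
\item It first replaces $G$ by its adjoint form.
\item By \cite{FW14}, the Braden--MacPherson algorithm on the labelled moment graph then computes the stalks of $\mathcal{E}_x$ with $\Z_p$-coefficients.
\item That algorithm runs by downward induction on the Bruhat order, so ${}^p h_{y,x}$ depends only on the directed, labelled full subgraph on $[y,x]$.
\item Woo's isomorphism of Richardson varieties \cite{Woo-arbitrary} matches $T$-fixed points, one-dimensional $T$-orbits and edge labels, which is exactly the input required.
\end{enumerate}
So what you actually need is not just a stratified isomorphism of slices. You need the torus-equivariant moment graph data, together with a theorem like \cite{FW14} saying that this data determines the stalks of the parity sheaf. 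Any repair of your slice argument, for instance via equivariant localisation on the slice, would essentially reproduce this route. The issues you flagged as the main obstacle (stratum preservation, central isogeny) are secondary by comparison; the paper handles its versions of them through Woo's torus compatibility and the passage to adjoint type.
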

	\begin{proof}
		Let $T' \subset B' \subset PGL_m$ be a choice of Borel subgroup and maximal torus inside $PGL_m$, and identify $\Sn_m$ with the Weyl group of $PGL_m$. 
		To invoke the arguments of \cite[\S7]{FW14} we will henceforth abuse notation and replace $G$ by its adjoint form; such a replacement is harmless as the isogeny induces an isomorphism of flag varieties that respects the Schubert stratification. 
		Further, let $T \subset B \subset G$ be a choice of maximal torus and Borel subgroup inside $G$, with root system $R$ and Weyl group $W$.
		We denote by $B_-'$ and $B_-$ the opposite Borel subgroups of $PGL_m$ and $G$ respectively. 
		The main result of \cite[\S3]{Woo-arbitrary} states that if $[u,w]$ interval pattern embeds in $[y,x]$ then the Richardson variety $X_{\,w}^{'u} = \overline{B'wB'/B'} \cap \overline{B'_-uB'/B'}$ is isomorphic to the Richardson variety $X_x^y =\overline{BxB/B} \cap \overline{B_-yB/B}$. 
		Moreover, the main result of \cite[\S4]{Woo-arbitrary} implies this isomorphism sends $T'$-fixed points of $X_{\,w}^{'u}$ to $T$-fixed points of $X_x^y$, and one-dimensional $T'$-orbits to one-dimensional $T$-orbits, and torus characters which appear as edge labels to the corresponding torus characters under the embedding $\iota$. 
		Consequently, the moment graph of $X_{\,w}^{'u}$ is isomorphic, as a directed, labelled subgraph, to a subgraph in the moment graph of $X_x^y$, with $wB'/B'$ mapping to $xB/B$. 
		The moment graphs of the Richardson varieties $X_{\,w}^{'u}$ and $X_x^y$ naturally embed in the moment graphs of the Schubert varieties $\overline{B'wB'/B'}$ and $\overline{BxB/B}$ respectively.  
		Consequently, the directed, labelled subgraph of the moment graph of $\overline{B'wB'/B'}$ with vertices $[u,w]$ is isomorphic to the directed, labelled subgraph of the moment graph of $\overline{BxB/B}$ with vertices $[y,x]$.  
		The main result of \cite{FW14} implies that, subject to various technical conditions, applying the Braden--MacPherson algorithm to the moment graph of each Schubert variety computes the stalks of the corresponding indecomposable parity sheaf. 
		The technical conditions are satisfied for Schubert varieties as we assume parity sheaves have coefficients in $\Z_p$ and $G$ is of adjoint type; see \cite[\S 7]{FW14}. 
		Since the algorithm is computed by downward induction on the Bruhat order, the previous isomorphism of directed, labelled subgraphs implies ${}^p h_{u,w} = {}^p h_{y,x}$. 
	\end{proof}

	\begin{Remark}\label{R:KLpoly}
		An analogous argument also shows that if $[u,w]$ interval pattern embeds in $[y,x]$ where $x,y \in W$, then the classical Kazhdan--Lusztig polynomials satisfy $h_{u,w} = h_{y,x}$.	
		In particular, one should use a parity sheaf with coefficients in a field of characteristic 0 and identify it with the corresponding intersection cohomology sheaf using \cite[\S2]{JMW-parity}. 
	\end{Remark}

	We will only consider embeddings that arise from the following construction.
	Fix $I \subseteq S$. 
	Set $W_I$ to be the group generated by $s \in I$.
	Recall that ${}^I W = \{ z \in W \, | \, wz \geq z \text{ for all } w \in W_I \}$ denotes the set of minimal length coset representatives.   
	
	\begin{Lemma}\label{L:IPE}
		Fix a standard parabolic subgroup $W_I \subseteq W$, a minimal length coset representative $z \in {}^I W$, and elements $u,w \in W_I$ with $u \leq  w$. 
		Then $[u,w]$ interval pattern embeds into $[uz, wz]$. 
	\end{Lemma}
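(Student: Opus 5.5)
The plan is to take the subsystem embedding to be the inclusion $\iota\colon V_I \hookrightarrow V$ of the span of the simple roots in $I$, so that $R' = R_I = V_I \cap R$ and $W' = W_I$. With this choice, $\iota(W')$ is literally $W_I \subseteq W$. We then verify the three conditions of an interval pattern embedding for $[u,w]$ into $[uz,wz]$ in turn.

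\emph{Condition (2).} The elements $uz$ and $wz$ both lie in the coset $W_I z$. Since the paper's convention is that $x,y$ lie in the same right $\iota(W')$-coset, this is exactly $W_I z$, so the condition holds immediately.

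\emph{Condition (1).} We must show $\phi(vz) = v$ for every $v \in W_I$, that is,
\[
\Inv(vz) \cap R_{I,+} = \Inv(v).
\]
Here we use the defining property of $z \in {}^I W$: $z^{-1}$ maps every positive root of $R_I$ to a positive root. Equivalently, $\ell(vz) = \ell(v) + \ell(z)$, and $\Inv(vz) = \Inv(v) \sqcup v\Inv(z)$.

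\begin{itemize}
\item Take $\beta \in R_{I,+}$. Then $\beta \in \Inv(vz)$ if and only if $(vz)^{-1}\beta = z^{-1}(v^{-1}\beta)$ is negative.
\item The root $v^{-1}\beta$ lies in $R_I$. Because $z^{-1}$ preserves the sign of roots in $R_I$, the root $z^{-1}v^{-1}\beta$ is negative exactly when $v^{-1}\beta$ is negative.
\item So $\beta \in \Inv(vz)$ if and only if $\beta \in \Inv(v)$, which is the required identity.
\end{itemize}

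One point needs care: the paper writes $\Inv(x) = R_+ \cap xR_-$, so I should match the paper's side convention. Under the other convention the roles of $z$ and $z^{-1}$ swap, and the same argument goes through after interchanging them.

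\emph{Condition (3).} We need the map $v \mapsto vz$ to give a poset isomorphism $[u,w] \to [uz,wz]$. This is the step I expect to be the main obstacle, because $[uz,wz]$ may a priori contain elements outside $W_I z$. The argument has three parts.

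\begin{enumerate}
\item \emph{Order preservation.} For $v,v' \in W_I$, we have $v \le v'$ if and only if $vz \le v'z$. The forward direction follows from subword reasoning: a reduced word for $v'z$ is obtained by concatenating reduced words for $v'$ and $z$. The reverse direction follows by applying the projection $W \to W_I$, $x \mapsto x_I$ (taking the $W_I$-component of $x = x_I\, {}^I x$), which is order preserving.
\item \emph{Containment in the coset.} Every $x$ with $uz \le x \le wz$ lies in $W_I z$. Write $x = x_I x'$ with $x' \in {}^I W$. The map $x \mapsto x'$ onto minimal coset representatives is order preserving (Deodhar), so $z \le x' \le z$, hence $x' = z$.
\item \emph{Conclusion.} Together these show that $v \mapsto vz$ is a bijection $[u,w] \to [uz,wz]$ that preserves and reflects the order.
\end{enumerate}

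If citing Deodhar's lemma for the projection to ${}^I W$ is uncomfortable, the fallback is a direct argument by induction on $\ell(wz) - \ell(uz)$ using the lifting property of Bruhat order.
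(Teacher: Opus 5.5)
Your plan is the paper's own: the standard embedding $R_I\subseteq R$; condition (1) from the fact that $z^{-1}$ sends $(R_I)_+$ into $R_+$; and condition (3) from $[uz,wz]\subseteq W_Iz$. Your direct sign check for (1) is equivalent to the paper's use of $\Inv(vz)=\Inv(v)\cup v\Inv(z)$ with $\Inv(z)\cap (R_I)_+=\emptyset$. You correctly deduce $[uz,wz]\subseteq W_Iz$ from Deodhar's order-preserving projection $W\to{}^IW$.

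The gap is in step (3)(a), reverse direction. You justify $vz\le v'z\Rightarrow v\le v'$ by asserting that the projection $x\mapsto x_I$, $W\to W_I$, is order preserving. That is false. In $\Sn_3$ with $I=\{s_1\}$ we have $s_1\le s_2s_1$, and $s_2s_1\in{}^IW$, so the $W_I$-components are $s_1$ and $e$, with $s_1\not\le e$. Only the projection to minimal coset representatives is monotone. The implication you need is true when both elements lie in the single coset $W_Iz$, but that restricted statement is exactly what has to be proved. As written, you have not shown that $v\mapsto vz$ maps $[u,w]$ onto $[uz,wz]$, nor that it reflects the order, so condition (3) is not yet established.

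Here is one way to close it. Since $\ell(v'z)=\ell(v')+\ell(z)$, concatenating reduced words $\underline{v'}$ and $\underline{z}$ gives a reduced word for $v'z$. If $vz\le v'z$, the subword property writes $vz=ac$, where:
\begin{itemize}
\item $a$ is the product of a subword of $\underline{v'}$, so $a\in W_I$ and $a\le v'$;
\item $c$ is the product of a subword of $\underline{z}$, so $c\le z$.
\end{itemize}
Then $c=a^{-1}vz\in W_Iz$. Moreover $z$ is the Bruhat-minimum of $W_Iz$: for $y\in W_I$ the word $\underline{y}\,\underline{z}$ is reduced for $yz$ and contains $\underline{z}$. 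Hence $c=z$, so $a=v$ and $v\le v'$. With this in place, your steps (b) and (c) complete the argument.

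Separately, your aside that the other inversion-set convention just swaps the roles of $z$ and $z^{-1}$ is too quick. With $\Inv(x)=R_+\cap x^{-1}R_-$ the lemma as stated fails: in $\Sn_3$ with $I=\{s_1\}$, $z=s_2$, $w=s_1$, one gets $\phi(wz)=e\neq w$. One would have to pass to $z\in W^I$ and $[zu,zw]$. Since you actually use the paper's convention, this does not affect your proof.
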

	
	\begin{proof}
		We first  prove condition (1). 
		Since $W_I$ is a standard parabolic subgroup, its root system $R_I$ is naturally identified with the root subsystem $R_I \subseteq R$ where $\iota$ is the obvious embedding (henceforth we omit $\iota$ from notation).
		The condition $\phi(wz)=w$ is equivalent to showing $\Inv(w) = \Inv(wz) \cap (R_I)_+$. 
		Now note that $\Inv(wz) = \Inv(w) \cup w \Inv(z)$, since $\ell(wz) = \ell(w)+ \ell(z)$. 
		Further, $\Inv(z) \cap (R_I)_+ = \emptyset$, as $z \in {}^I W$. 
		Finally, since $w$ preserves $R_I$ and is bijective, it follows that $w \Inv(z) \cap (R_I)_+ = \emptyset$. 
		An identical argument shows $\phi(uz)=u$.
		Hence condition (1) is satisfied. 
		Condition $(2)$ holds by construction.
		Finally, condition $(3)$ follows from the Bruhat order preserving  property of minimal coset representatives and the fact $[uz,wz] \subseteq W_I z$; see \cite[\S2.5]{BB08}. 
	\end{proof}

	We can now prove the main results of this paper. 

    \begin{proof}[Proof of Theorems \ref{T:Main} and \ref{T:Main2}]
		To prove \autoref{T:Main}, set $m=T(p)$ and $w \in \Sn_m$ as in \autoref{L:Seed Torsion}; to prove \autoref{T:Main2}, set $m=P(p)$ and $w \in \Sn_m$ as in \autoref{L:Seed Perverse}. 
		Let $r$ denote the rank of a maximal, irreducible, type-$A$ standard parabolic subgroup in $W$.
		In particular, in type $A_n$ we have $r=n$, and in types $B_n$, $C_n$ and $D_n$ we have $r=n-1$. 
		Set $k = \lfloor (r+1)/m\rfloor$.
		Let $W_I \subseteq W$ be a standard parabolic subgroup of type $A_{m-1} \times \dots \times A_{m-1}$, with $k$ factors, i.e. $W_I \cong \Sn_{m} \times \dots \times \Sn_{m}$.
		For any $x \in W$ we can write $x=vz$, where $v = w_1 \dots w_k \in W_I$, with each $w_i \in \Sn_{m}$, and $z \in {}^I W$ is a minimal coset representative.
		Since the $w_i$ commute, we can unambiguously define $v_i = \prod_{j \neq i} w_j$, and $z_i = v_i z$.  
		Hence $x = w_i z_i$, and $z_i$ is a minimal coset representative for the $i$-th factor of $\Sn_m$ inside $W_I$, and in turn $W$.  
		If some $w_i = w$, then for any $u \leq w$,  \autoref{L:IPE} implies $[u,w]$ interval pattern embeds in $[uz_i, x]$.   
		Then the proportion of elements in $W$ that $[u,w]$ embeds in is at least
 		\begin{align*}
			\frac{|W| - (m! - 1)^k \, |{}^I W|}{|W|}
			=
			1-\left(1-\frac{1}{m!}\right)^k.
		\end{align*}
		Since $k$ grows linearly with $n$, the complementary proportion decays exponentially in $n$.
		Assuming there is some $i$ such that $w_i=w$, then, for any $u \leq w$, \autoref{L:pKL} implies ${}^p h_{u,w} = {}^p h_{uz_i , x}$ and similarly $ h_{u,w} =  h_{uz_i , x}$ by \autoref{R:KLpoly}.  
		 Again, downward induction using the unitriangularity of Kazhdan--Lusztig polynomials implies ${}^p a_{u,w} = {}^p a_{uz_i , x}$ for all $u \leq w$. 
		 In particular, if we now choose $u$ such as in \autoref{L:Seed Torsion} then \autoref{L: torsion equivalences} implies \autoref{T:Main}, and if we choose $u$ as in \autoref{L:Seed Perverse} then \autoref{L: numerical non-perversity} implies \autoref{T:Main2}. 
	\end{proof}

	\begin{Remark}
		In practice the complementary proportion appears to decay much faster than our bound in the proof of \autoref{T:Main} suggests; see \autoref{S:data}. 
	\end{Remark}

\section{Different species}\label{S:Partial}

	We now consider $p$-Kazhdan--Lusztig theory for certain families of partial flag varieties. 
	In this setting, there are two distinct classes of $p$-Kazhdan--Lusztig bases to consider: 
	spherical $p$-Kazhdan--Lusztig bases $\{ {}^p c_{x} \, | \, x \in {}^I W \}$, which encode the stalks of indecomposable parity sheaves; and 
	anti-spherical $p$-Kazhdan--Lusztig bases $\{ {}^p d_{x} \, | \, x \in {}^I W \}$ which encode the filtration multiplicities of indecomposable tilting sheaves; see \cite[\S2.6]{Williamson-hecke-category} or \cite[\S2]{Baine-cominuscule} for more details. 
	In particular, one can compute the stalks of mixed intersection cohomology sheaves from the anti-spherical $p$-Kazhdan--Lusztig basis using the arguments in \cite[\S 8]{Baine-cominuscule}. 
	We denote the classical spherical and anti-spherical Kazhdan--Lusztig bases by $\{ c_{x} \, | \, x \in {}^I W \}$ and $\{ d_{x} \, | \, x \in {}^I W \}$ respectively. 
	\\
	\par
	The asymptotic behaviour of $p$-Kazhdan--Lusztig bases differs greatly depending on both the family of partial flag varieties and the characteristic $p$.  
	For a partial flag variety $X_n$ we denote the corresponding minimal coset representatives indexing its Bruhat stratification by ${}^I W_n$. 
	
	\begin{Proposition}
		For each $L \in \left\lbrace 0, \tfrac{1}{2}, 1 \right\rbrace$, there exist families of partial flag varieties $\{X_n \}$ such that 
		\begin{align*}
			\lim_{n \rightarrow \infty}
			\frac{|\{ x \in {}^IW_n  ~|~ {}^2 d_x  \neq d_x \}|}{|{}^I W_n|}
			= L.
		\end{align*}
	\end{Proposition}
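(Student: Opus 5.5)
For each of the three values of $L$ I will exhibit an explicit family of partial flag varieties and compute the density directly. I will use only families where the anti-spherical $2$-Kazhdan--Lusztig basis is known in closed form, namely the (co)minuscule families treated in \cite{Baine-cominuscule} and the Grassmannian/Hermitian symmetric families of \cite{BowmanDeVisscherHaziNorton-hermitian}. In these families ${}^I W_n$ is in bijection with Dyck-type paths or partitions in a box, and the condition ${}^2 d_x \neq d_x$ becomes an explicit combinatorial condition on the path. Each limit then reduces to a counting problem.

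\emph{Case $L=0$.} I will take a family whose $2$-torsion is confined to a thin set, and the simplest choice is projective spaces $X_n=\mathbb{P}^n$, i.e. type $A_n$ with $I=S\setminus\{s_1\}$. Here ${}^IW_n$ is a chain, every anti-spherical $p$-Kazhdan--Lusztig basis element agrees with the classical one for all $p$, and the numerator is identically $0$. If one prefers a family with nontrivial torsion, the type $A$ Grassmannians $\mathrm{Gr}(k,n)$ with $k$ fixed also work. In that case the criterion of \cite{BowmanDeVisscherHaziNorton-hermitian} forces a specific local configuration of size depending only on $k$. I would then check that the proportion of partitions in a $k\times(n-k)$ box containing that configuration tends to $0$, or stays bounded away from $1$ according to the criterion; this step needs to be checked.

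\emph{Case $L=1$.} I will take a family in which a fixed $2$-torsion seed can be planted with probability tending to $1$. The natural choice is Grassmannians $\mathrm{Gr}(n,2n)$, viewed as partitions in an $n\times n$ box. A uniformly random partition in an $n\times n$ box has a boundary path with $\Theta(n)$ corners. By a parabolic analogue of \autoref{L:IPE}, which is the anti-spherical pattern embedding used in \cite{Baine-cominuscule}, I will split the boundary path into about $n/c$ disjoint windows. If any window matches a fixed seed pattern (for $p=2$, the smallest Grassmannian exhibiting $2$-torsion in \cite{BowmanDeVisscherHaziNorton-hermitian}), then ${}^2 d_x\neq d_x$. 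A large-deviation or independence estimate over the windows, as in the proof of \autoref{T:Main}, then gives density $1-O(\theta^{n})$ with $\theta<1$. Alternatively, I may use the type $D$ quadric or spinor families from \cite{Baine-cominuscule}, where torsion is governed by local patterns.

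\emph{Case $L=\tfrac12$.} This is the delicate case, and I expect it to be the main obstacle. I plan to use a family where the criterion from \cite{Baine-cominuscule} reduces to a single binary statistic of the path, such as the parity of a particular step or of the number of boxes on a diagonal. The candidate is even quadrics $Q_{2n-2}$ in type $D_n$ with $I=S\setminus\{s_1\}$, where ${}^IW_n$ has $2n$ elements, or a type $C_n$/$B_n$ analogue. For these I would list the anti-spherical ${}^2 d_x$ explicitly. Each ${}^2 d_x$ is obtained from $d_x$ by a $2$-dependent correction that occurs exactly on one half of the poset, for instance on one of the two branches of the middle fork. The density would then equal $n/(2n)$ up to $O(1)$ boundary terms, which gives limit $\tfrac12$.

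The main risk is verifying, in the chosen family, which elements have ${}^2 d_x\neq d_x$; for this I would rely on the explicit formulas of \cite[\S8]{Baine-cominuscule}. If the quadric case instead produces torsion on a vanishing or full proportion, my fallback is a disjoint union argument. I would take $X_n$ to be a product of two partial flag varieties, one from the $L=0$ family and one from the $L=1$ family, weighted so that each contributes half of ${}^IW_n$. This requires matching cardinalities, which can be arranged by choosing parameters.
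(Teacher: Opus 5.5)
Your $L=0$ case is fine: $\mathbb{P}^n$ is a minuscule family, and the paper simply takes any family of minuscule flag varieties. The gap is in the other two cases. Every family you propose there is again minuscule of simply laced type: $\mathrm{Gr}(n,2n)$, the type $D$ quadrics and spinor varieties, and the even quadrics $Q_{2n-2}$. By \cite{BowmanDeVisscherHaziNorton-hermitian}, and as the paper states, such families have ${}^p d_x=d_x$ for every $x$ and every $p$.

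Consequently there is no ``smallest Grassmannian exhibiting $2$-torsion'' to plant in $\mathrm{Gr}(n,2n)$, and your aside that $\mathrm{Gr}(k,n)$ carries torsion is wrong. Both your $L=1$ and $L=\tfrac12$ constructions yield limit $0$.

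Anti-spherical $2$-torsion appears only in the non-simply-laced cominuscule (non-minuscule) families, and these are what the paper uses:
\begin{itemize}
\item Odd quadrics $Q_{2n-1}$ in type $B_n$ give $L=\tfrac12$. By \cite[\S7.8]{Baine-cominuscule}, exactly $n-1$ of the $2n$ elements have ${}^2d_x\neq d_x$, so the density is $(n-1)/(2n)\to\tfrac12$. Your passing ``type $B_n$ analogue'' is the right family, but ${}^IW_n$ there is a chain, so the ``branch of the middle fork'' mechanism you describe does not occur.
\item Lagrangian Grassmannians $LG(n,2n)$ in type $C_n$ give $L=1$.
\end{itemize}

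For $LG(n,2n)$ the ingredient you are missing is Baine's criterion: ${}^2d_x=d_x$ if and only if $E(x)=\emptyset$, for $x\subseteq\{1,\dots,n\}$. Encode $x$ as a $\pm1$ walk $\Sigma_t$; then the condition says the walk never reaches height $2$. The reflection principle gives $\binom{n}{i}-\binom{n}{i-2}$ such walks with $i$ up-steps. These telescope to $\binom{n+1}{\lfloor (n+1)/2\rfloor}$, so the torsion-free density is $\sim 2\sqrt{2/(\pi(n+1))}\to 0$.

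This also shows your windowing strategy could not work even in the correct family. The criterion depends on the running maximum of the walk, not on any local pattern: a block of up-steps after a long descent produces no torsion. Moreover, the complementary density decays only like $n^{-1/2}$, so no bound of the form $1-O(\theta^n)$ holds.

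Finally, your product fallback does not produce $\tfrac12$. The strata of $X\times Y$ are indexed by pairs $(x,y)$, and the $p$-canonical basis of a product is the external tensor product, so torsion-free proportions multiply. Combining limits $0$ and $1$ therefore gives $1$. A ``weighted'' disjoint union, which is what your count implicitly assumes, is not a partial flag variety.
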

	
	\begin{Remark}
		In \cite[\S7.2]{Baine-cominuscule} two definitions of the set $E(x)$ are provided. 
		The first contains a typographic error and should read 
		\begin{align}
		\label{E: E Set}
			E(x) 
			=
			\{
			t \in x
			\, | \, 
			t \text{ is even and }
			|\{ t' \in x \, | \, t> t' \geq k \}| \geq  |\{ t' \notin x \, | \, t> t' \geq k \}|
			\text{ for all } t>k\geq 1
			\}.
		\end{align}
		This definition now agrees with the one provided in \cite[Equation (2)]{Baine-cominuscule}. 	
	\end{Remark}
	
	\begin{proof}
	It follows from \cite{BowmanDeVisscherHaziNorton-hermitian,Baine-cominuscule} that the limit is $0$ when $\{X_n\}$ is any family of minuscule flag varieties.
		When $\{ X_n\}$ is a family of odd-dimensional quadric hypersurfaces it follows from \cite[\S7.8]{Baine-cominuscule} that 
		\begin{align*}
			\frac{|\{ x \in {}^I W_n ~|~ {}^2 d_x  \neq d_x \}|}{|{}^I W_n|}
			=
			\frac{n-1}{2n}
			\longrightarrow
			\frac{1}{2}. 
		\end{align*}
		
		The most interesting case occurs when $\{ X_n \}$ is the family of Lagrangian Grassmannians (i.e. when $X_n = LG(n,2n)$). 
		In \cite[\S7.1]{Baine-cominuscule} it is explained that Schubert varieties in the Lagrangian Grassmannian $LG(n,2n)$ are naturally indexed by subsets $x \subseteq \{ 1, \dots, n \}$. 
		To each subset $x$ we can associate a set $E(x)$, as in \autoref{E: E Set}.
		It follows from \cite[\S1.3]{Baine-cominuscule} that ${}^2 d_x = d_x$ if and only if $E(x) = \emptyset$. 
		Hence it suffices to determine $|\{ x \subseteq \{ 1, \dots ,n \} \, | \, E(x) = \emptyset \}|$.
		First, fix $x \subseteq \{ 1 , \dots , n \}$ and define $\sigma_t = 1$ if $t \in x$, and $\sigma_t = -1$ if $t \notin x$. 
		If we set $\Sigma_0=0$ and $\Sigma_t = \sigma_1 + \dots + \sigma_t$, then \autoref{E: E Set} becomes
		\begin{align}
		\label{E: E Set new }
			E(x) 
			=
			\{
			t \in x
			\, | \, 
			t \text{ is even and }
			\Sigma_{t-1} \geq \Sigma_{k-1}
			\text{ for all } t>k\geq 1
			\}.
		\end{align}  
		We claim $E(x) = \emptyset$ if and only if $\Sigma_t < 2$ for all $t$. 
		Suppose $t \in E(x)$. 
		Then $\Sigma_t > \Sigma_k$ for all $0\leq k<t$ by \autoref{E: E Set new } and the fact $\sigma_t =1$. 
		Moreover, since $t$ is even, $\Sigma_t$ is even, and hence $\Sigma_t \geq 2$. 
		Conversely, suppose $t$ is the first time $\Sigma_t = 2$. 
		Note that $t$ is necessarily even and $\Sigma_{t-1}=1\geq \Sigma_{k-1}$ for all $t>k\geq1$, so $t\in E(x)$. 
		Hence $E(x) = \emptyset$ if and only if $\Sigma_t < 2$ for all $t$.
		Equivalently $|\{ x \subseteq \{ 1, \dots ,n \} \, | \, E(x) = \emptyset \}|= |\{ x \subseteq \{ 1, \dots ,n \} \, | \, \Sigma_{k} < 2  \text{ for all } k\}|$. 
		If we view $(t, \Sigma_t)$ as a walk in $\Z^{2}$ starting at $(0,0)$, then the number of walks with exactly $i$ up-steps and maximum height $<2$ is   
		\begin{align*}
			|\{ x \subseteq \{ 1, \dots ,n \} \, | \,  \Sigma_{k} < 2  \text{ for all } k \text{ and } |x| =i\}|
			=
			\binom{n}{i} - \binom{n}{i-2}.
		\end{align*}
		Note that any $x \subseteq \{ 1, \dots , n \}$ satisfying $\Sigma_k <2$ for all $k$ must satisfy $|x| \leq  \lfloor \frac{n+1}{2} \rfloor$.  
		Consequently 
		\begin{align*}
			\frac{|\{ x \subseteq \{ 1, \dots ,n \} \, | \, E(x) = \emptyset \}|}{|{}^I W_n|}
			= 
			\frac{1}{2^{n}}\left(\binom{n}{\lfloor \frac{n+1}{2} \rfloor} + \binom{n}{\lfloor \frac{n+1}{2} \rfloor-1} \right)
			=
			\frac{1}{2^{n}}\binom{n+1}{\lfloor \frac{n+1}{2} \rfloor}
			\sim 
			2\sqrt{\frac{2}{\pi (n+1)}},
		\end{align*}
		where the asymptotic formula follows from Stirling's approximation. Hence
		\[
			\frac{|\{ x \in {}^IW_n ~|~ {}^2 d_x \neq d_x \}|}{|{}^I W_n|}
			=
			1-2\sqrt{\frac{2}{\pi (n+1)}}
			+ o\left(n^{-1/2}\right)
			\longrightarrow 1,
		\]
	where $o\left(n^{-1/2}\right)$ denotes little-o notation. 
    \end{proof}
    
    \begin{Remark}
    	Note that the results of \cite{Baine-cominuscule} imply that for all of the aforementioned partial flag varieties, 	the corresponding ratios are
    	\begin{align*}
			\frac{|\{ x \in {}^IW_n  ~|~ {}^p d_x  \neq d_x \}|}{|{}^I W_n|}
			=0,
			&&
			\frac{|\{ x \in {}^IW_n  ~|~ {}^p c_x  \neq c_x \}|}{|{}^I W_n|}
			=0
		\end{align*}
		for all $n$, when $p$ is a good prime for the corresponding classical group $G$. 
    \end{Remark}

\section{Surveying the landscape}\label{S:data}
    
    The $p$-Kazhdan--Lusztig bases can be computed for Weyl groups in small rank using the ASLoc package of Gibson--Jensen--Williamson \cite{ASLoc}. 
    These examples indicate that the proportion of $x \in W$ satisfying ${}^p b_x \neq b_x$ grows \emph{much} faster than the growth suggested by the proof of \autoref{T:Main} and \autoref{T:Main2}. 
    \autoref{F:p2-percentage} displays this growth when $p=2$ for Weyl groups of various types and ranks. 
    (We have omitted type $A_n$, with $n\leq 6$, from the figure, since one always has ${}^p b_x=b_x$, for all $p$, in these groups.)
    Further computational data are available in \cite{BT}.
   The authors think it is an interesting question to determine more precise asymptotic formulas for the growth of $p$-torsion in the (co)stalks of intersection cohomology sheaves on Schubert varieties.
   
    \begin{figure}[ht]
		\centering
		\includegraphics[width=0.75\textwidth]{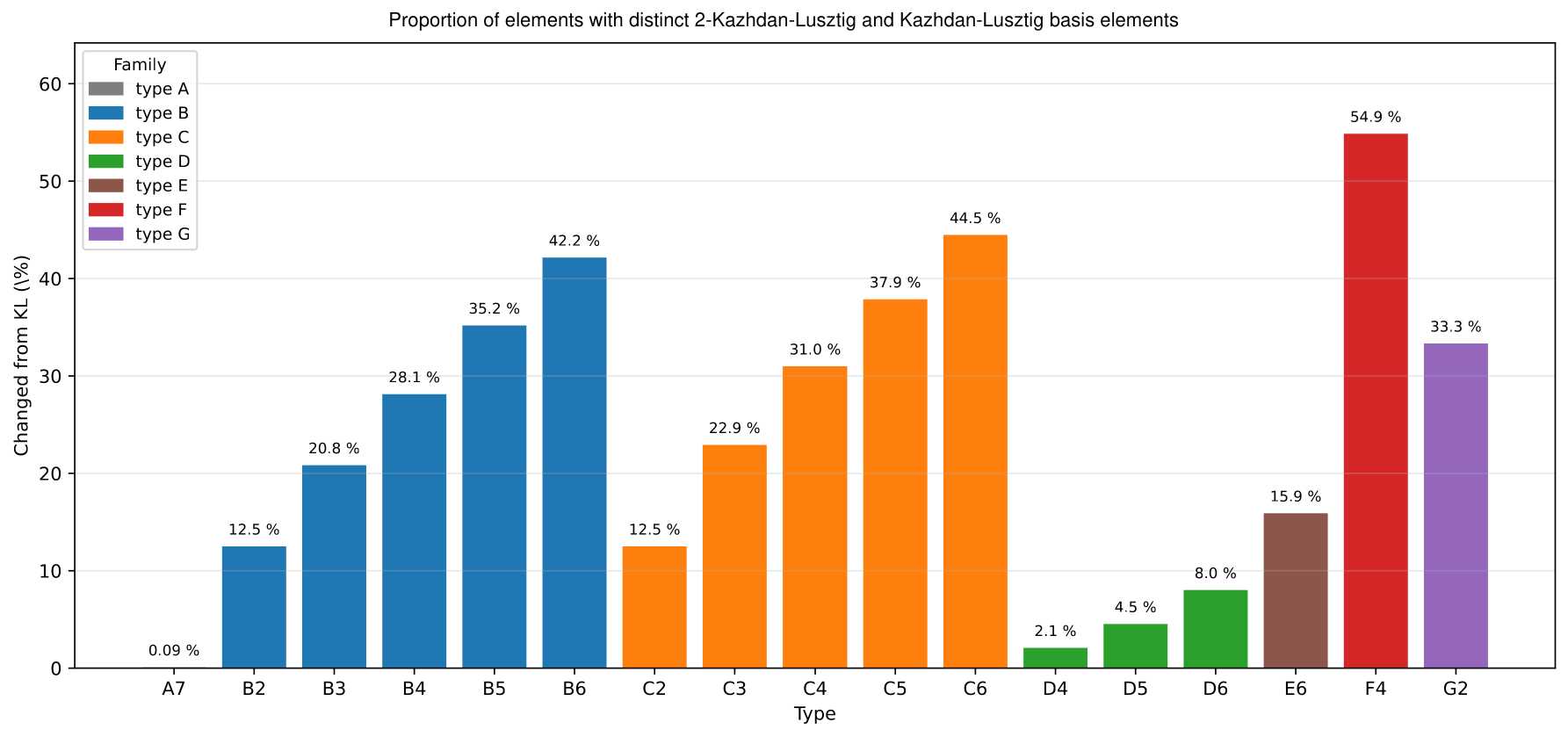}
		\caption{The proportion of elements $x \in W$ for which ${}^2 b_x \neq b_x$.}
		\label{F:p2-percentage}
	\end{figure}

\end{document}